  \newtheorem{thm}{Theorem}[section]
  \newtheorem{prop}[thm]{Proposition}
  \newtheorem{cor}[thm]{Corollary}
  \theoremstyle{definition}
  \newtheorem{defn}[thm]{Definition}
  \newtheorem{exm}[thm]{Example}
  \newtheorem{rmk}[thm]{Remark}
 \newcommand\ra{\rightarrow}
\newcommand{\lex}{\,\overrightarrow{\times}\,}
 \newcommand\s{\subseteq}
\newcommand\mi{\mathcal{I}}
 \numberwithin{equation}{section}
\def\im{ \mathrm{Im} }
\begin{document}

\title{ On epicomplete $MV$-algebras }
\author{ Anatolij Dvure\v{c}enskij$^{^{1,2}}$, Omid Zahiri$^{^{3,4}}$ \\
{\small\em $^1$Mathematical Institute,  Slovak Academy of Sciences, \v Stef\'anikova 49, SK-814 73 Bratislava, Slovakia} \\
{\small\em $^2$Depart. Algebra  Geom.,  Palack\'{y} Univer. 17. listopadu 12, CZ-771 46 Olomouc, Czech Republic} \\
{\small\em  $^{3}$University of Applied Science and Technology, Tehran, Iran}\\
{\small\em  $^{4}$ Department of Mathematics, Shahid Beheshti University, G. C., Tehran, Iran}\\
{\small\tt  dvurecen@mat.savba.sk\quad   zahiri@protonmail.com} }
\date{}
\maketitle
\begin{abstract}
The aim of the paper is to study epicomplete objects in the category of $MV$-algebras.
A relation between injective $MV$-algebras and epicomplete $MV$-algebras is found, an equivalence condition for an $MV$-algebra
to be epicomplete is obtained, and it is shown that the class of divisible $MV$-algebras and the class of epicomplete $MV$-algebras are the same.
Finally, the concept of an epicompletion for $MV$-algebras is introduced, and the conditions under  which an $MV$-algebra has
an epicompletion are obtained. As a result we show that each $MV$-algebra has an epicompletion.
\end{abstract}

{\small {\it AMS Mathematics Subject Classification (2010)}:  06D35, 06F15, 06F20 }

{\small {\it Keywords:} $MV$-algebra, Epicomplete $MV$-algebra, Divisible $MV$-algebra, Injective $MV$-algebra, Epicompletion, $a$-closed $MV$-algebra. }

{\small {\it Acknowledgement:}  This work was supported by  grant VEGA No. 2/0069/16 SAV and GA\v{C}R 15-15286S.}

\section{ Introduction }
Epicomplete objects are interesting objects in each category. Many researches studied this object in the category of lattice ordered groups ($\ell$-group). Pedersen \cite{pedersen} defined the concept of an $a$-epimorphism in this category. It is an $\ell$-homomorphism which is also an epimorphism in the category of all torsion free Abelian groups.
Anderson and Conrad \cite{Anderson}  proved that each epimorphism in the category of Abelian
$\ell$-groups is an $a$-epimorphism. They showed that an Abelian $\ell$-group $G$ is epicomplete if and only if
it is divisible. They also studied epicomplete objects in some subcategories of Abelian $\ell$-groups.
In particular, they proved that epicomplete objects in the category of Abelian $o$-groups (linearly ordered groups)
with complete $o$-homomorphisms are the Hahn groups.
Darnel \cite{Darnel} continued to study these objects and showed that any completely distributive epicomplete object in
the category $\mathcal C$ of Abelian $\ell$-groups with complete $\ell$-homomorphisms is $V(\Gamma,\mathbb R)$ for some root system $\Gamma$, where $V(\Gamma,\mathbb R)$ is the set of functions $v: \Gamma \to \mathbb R$ whose support satisfies the ascending chain condition with a special order (see \cite[Prop 51.2]{2}).

He verified a new subcategory of $\mathcal{C}$ containing completely-distributive Abelian $\ell$-groups with complete $\ell$-homomorphisms and proved the converse. That is, the only epicomplete objects in this category are of the form $V(\Gamma,\mathbb R)$. Ton \cite{ton} studied epicomplete archimedean $\ell$-groups  and proved that
epicomplete objects in this category are $\ell$-isomorphic to a semicomplete subdirect sum of real groups.
Many other references can be found in \cite{Ball1,Ball2}.
Recently, Hager \cite{Hager} posed a question on the category of Archimedean $\ell$-groups  ``Does the epicompleteness imply the existence of a compatible reduced $f$-ring multiplication? ''.  His answer to this question was ``No'' and he tried to find a partial positive answer for it.

$MV$-algebras were defined by Chang \cite{Cha} as an algebraic counterpart of many-valued reasoning. The principal result of theory of $MV$-algebras is a representation theorem by Mundici \cite{Mun} saying that there is a categorical equivalence between the category of MV-algebras and the category of unital Abelian $\ell$-groups. Today theory of $MV$-algebras is very deep and has many interesting connections with other parts of mathematics with many important applications to different areas. For more details on $MV$-algebras, we recommend the monographs \cite{mundici 1, mundici 2}.

In the present paper, epicomplete objects in $\mathcal{MV}$, the category of $MV$-algebras, are studied. The concept of an $a$-extension in $\mathcal{MV}$ is introduced to obtain a condition on minimal prime ideals of an $MV$-algebra $M$ under which  $M$ is epicomplete. Some relations between injective, divisible and epicomplete $MV$-algebras are found. In the final section, we introduce a completion for an $MV$-algebra which is epicomplete and has the universal mapping property. We called it the epicompletion and we show that any $MV$-algebra has an epicompletion.

\section{ Preliminaries}
In the section, we gather some basic notions relevant to $MV$-algebras and
$\ell$-groups which will be needed in the next sections. For more details, we recommend to consult the books \cite{Anderson1, 2} for theory of $\ell$-groups and \cite{Di Nola,mundici 1, mundici 2} for $MV$-algebras.

We say that an $MV$-{\it algebra} is an algebra $(M;\oplus,',0,1)$ (and we will write simply $M=(M;\oplus,',0,1)$) of type $(2,1,0,0)$, where $(M;\oplus,0)$ is a
commutative monoid with the neutral element $0$ and, for all $x,y\in M$, we have:
\begin{enumerate}
	\item[(i)] $x''=x$;
	
	\item[(ii)] $x\oplus 1=1$; 
	
	\item[(iii)] $x\oplus (x\oplus y')'=y\oplus (y\oplus x')'$.
\end{enumerate}

In any $MV$-algebra $(M;\oplus,',0,1)$,
we can define the following further operations:
$$
x\odot y:=(x'\oplus y')',\quad x\ominus y:=(x'\oplus y)'.
$$

In addition, let $x \in M$. For any integer $n\ge 0$, we set
$$0.x=0,\quad 1.x=x,\quad n.x=(n-1).x\oplus x, \ n\ge 2,$$
and
$$x^0=1,\quad x^1 =1, \quad x^n=x^{n-1}\odot x, \ n\ge 2.$$

Moreover, the relation $x\leq y \Leftrightarrow x'\oplus y=1$ is a partial order on $M$ and $(M;\leq)$ is a lattice, where
$x\vee y=(x\ominus y)\oplus y$ and $x\wedge y=x\odot (x'\oplus y)$.
We use $\mathcal{MV}$ to denote the category of $MV$-algebras whose objects are $MV$-algebras and morphisms are $MV$-homomorphisms.
A non-empty subset $I$ of an $MV$-algebra $(M;\oplus,',0,1)$ is called an {\it ideal} of $M$ if $I$ is a down set which is closed under $\oplus$.
The set of all ideals of $M$ is denoted by $\mi(M)$.
For each ideal $I$ of $M$, the relation $\theta_I$ on $M$ defined by $(x,y)\in\theta_I$ if and only if $x\ominus y,y\ominus x\in I$ is a congruence relation on $M$, and $x/I$ and $M/I$ will denote $\{y\in M\mid (x,y)\in\theta_I\}$ and $\{x/I\mid x\in M\}$, respectively. A {\it prime} ideal is a proper ideal $I$ of $M$ such that $M/I$ is a linearly ordered $MV$-algebra, or equivalently, for all $x,y \in M$, $x\ominus y \in I$ or $y\ominus x\in I$. The set of all minimal prime ideals of $M$ is denoted by $Min(M)$. If $M_1$ is a subalgebra of an $MV$-algebra $M_2$, we write $M_1\leq M_2$.

\begin{rmk}
Let $M_1$ be a subalgebra of an $MV$-algebra $M_2$. For any ideal $I$ of $M_2$, the set $\bigcup_{x\in M_1}x/I$ is a subalgebra of $M_2$
containing $I$ which is denoted by $M_1+I$ for simplicity.
\end{rmk}

An element $a$ of an $MV$-algebra  $(M;\oplus,',0,1)$ is called a
{\it boolean} element if $a'':=(a')'=a$. The set of all boolean elements of $M$ is denoted by $B(M)$. An ideal $I$ of $M$
is called a {\it stonean} ideal if  there is a subset $S\s B(M)$ such that
$I=\downarrow S$, where $\downarrow S=\{x\in M\mid x\leq a\mbox{ for some $a\in S$ }\}$.
An element $x\in M$ is called {\it archimedean} if there is a integer $n\in\mathbb{N}$ such that $n.x$ is boolean.
An $MV$-algebra $M$ is said to be {\it hyperarchimedean} if all its elements are archimedean.  For more details about
hyperarchimedean $MV$-algebra see \cite[Chap 6]{mundici 1}

A group $(G;+,0)$ is said to be {\it partially ordered} if it is equipped with a partial order relation $\leq$ that is compatible with $+$, that is,
$a\leq b$ implies  $x+a+y\leq x+b+y$ for all $x,y\in G$. An element $x\in G$ is called {\it positive} if $0\leq x$.
A partially ordered group $(G;+,0)$ is called a {\it lattice ordered group} or simply an $\ell$-{\it group}
if $G$ with its partially order relation is a lattice.
The {\it lexicographic product} of two po-groups $(G_1;+,0)$ and $(G_2;+,0)$ is the direct product $G_1\times G_2$ endowed with the lexicographic ordering $\le$ such that $(g_1,h_1) \le (g_2,h_2)$ iff $g_1<g_2$ or $g_1=g_2$ and $h_1\le h_2$ for $(g_1,h_1), (g_2,h_2)\in G_1\times G_2$.
The lexicographic product of po-groups $G_1$ and $G_2$ is denoted by $G_1\lex G_2$.

An element $u$ of an $\ell$-group $(G;+,0)$ is called a {\it strong unit} if, for each $g\in G$, there exists $n\in \mathbb{N}$ such
that $g\leq nu$. A  couple $(G,u)$, where $G$ is an $\ell$-group and $u$ is a fixed strong unit for $G$, is said to be a {\it unital} $\ell$-{\it group}.

If $(G;+,0)$ is an Abelian $\ell$-group with strong unit $u$, then the interval $[0,u]:=\{g \in G \mid 0\leq g \leq u\}$ with the operations $x\oplus y:=(x+y)\wedge u$ and $x':=u-x$ forms
an $MV$-algebra, which is denoted by $\Gamma(G,u)=([0,u];\oplus,',0,u)$. Moreover, if $(M;\oplus,0,1)$ is an $MV$-algebra, then according to the famous theorem by Mundici, \cite{Mun}, there exists a unique (up to isomorphism)  unital Abelian $\ell$-group
$(G,u)$ with strong $u$ such that $\Gamma(G,u)$ and $(M;\oplus,0,1)$ are isomorphic (as $MV$-algebras).
Let $\mathcal{A}$ be the category of unital Abelian $\ell$-groups whose objects are unital $\ell$-groups and morphisms are unital $\ell$-group morphisms (i.e. homomorphisms of $\ell$-groups preserving fixed strong units). It is important to note that $\mathcal{MV}$ is a variety whereas $\mathcal A$ not because it is not closed under infinite products.  Then $\Gamma: \mathcal{A}\ra \mathcal{MV}$ is a functor between these categories. Moreover, there is another functor from the category of $MV$-algebras to $\mathcal{A}$ sending $M$ to a Chang $\ell$-group induced by good sequences of the $MV$-algebra $M$, which is
denoted by $\Xi:\mathcal{MV}\ra \mathcal{A}$. For more details relevant to these functors, please see \cite[Chaps 2 and 7]{mundici 1}.

\begin{thm}{\rm\cite[Thms 7.1.2, 7.1.7]{mundici 1}}\label{functor}
The composite functors $\Gamma\Xi$ and $\Xi\Gamma$ are naturally equivalent to the identity functors of
$\mathcal{MV}$ and $\mathcal{A}$, respectively. Therefore, the categories $\mathcal{A}$ and $\mathcal{MV}$ are categorically equivalent.
\end{thm}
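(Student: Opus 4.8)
The plan is to construct the two functors explicitly, verify that each is well defined on objects and on morphisms, and then exhibit the natural isomorphisms $\Gamma\Xi\cong\mathrm{id}_{\mathcal{MV}}$ and $\Xi\Gamma\cong\mathrm{id}_{\mathcal{A}}$ component by component; the final ``therefore'' clause is then just the standard fact that such a pair of functors constitutes an equivalence of categories. The functor $\Gamma$ is already described in the preliminaries: on objects $(G,u)\mapsto([0,u];\oplus,',0,u)$, and on a unital $\ell$-morphism $h\colon(G,u)\to(H,v)$ one simply restricts $h$ to $[0,u]$, which lands in $[0,v]$ and preserves $\oplus$ and $'$ because $h$ preserves $+$, $\wedge$ and carries $u$ to $v$. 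Thus $\Gamma$ is a functor with essentially no work, and all the substance lies in constructing $\Xi$.

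Given an MV-algebra $M$, I would form the set $M^+$ of \emph{good sequences}: sequences $\mathbf{a}=(a_1,a_2,\dots)$ in $M$ that are eventually $0$ and satisfy $a_i\oplus a_{i+1}=a_i$ for all $i$ (so that each good sequence has the shape $(1,\dots,1,a,0,0,\dots)$). I equip $M^+$ with the addition $(\mathbf{a}+\mathbf{b})_i=a_i\oplus(a_{i-1}\odot b_1)\oplus\cdots\oplus(a_1\odot b_{i-1})\oplus b_i$. The first key step is to prove that $(M^+;+)$ is a commutative monoid which is moreover \emph{cancellative}; this is where most of the computation lives, resting on the basic MV-identities for $\oplus$ and $\odot$. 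Once cancellativity is secured, the Grothendieck construction embeds $M^+$ as the positive cone of an Abelian group $\Xi(M)$; the order inherited from the monoid extends to a compatible lattice order, and $u:=(1,0,0,\dots)$ is a strong unit, since any good sequence, vanishing past its support, is dominated by $nu$ for suitable $n$. Functoriality of $\Xi$ then follows by applying a given MV-morphism coordinatewise to good sequences.

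For the two natural isomorphisms I would argue as follows. In $\Xi(M)$ the interval $[\,0,u\,]$ consists precisely of the good sequences $(a,0,0,\dots)$ with $a\in M$, because $0\le(a_1,a_2,\dots)\le(1,0,\dots)$ forces $a_2=a_3=\cdots=0$; the assignment $a\mapsto(a,0,\dots)$ is then checked to be an MV-isomorphism $M\xrightarrow{\ \sim\ }\Gamma\Xi(M)$, natural in $M$. For $\Xi\Gamma(G,u)$, I would put $M=[0,u]$ and establish the \emph{good-sequence decomposition}: every $g\ge 0$ in $G$ gives the good sequence $g_i=((g-(i-1)u)\vee 0)\wedge u$ of elements of $[0,u]$, the original $g$ is recovered as $\sum_i g_i$, and this correspondence is a monoid isomorphism of $M^+$ onto the positive cone $G^+$. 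Passing to Grothendieck groups upgrades it to a unital $\ell$-isomorphism $\Xi\Gamma(G,u)\cong(G,u)$, again natural in $(G,u)$.

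The main obstacle I expect is entirely internal to $\Xi$ and is twofold. First, the associativity and cancellativity of good-sequence addition are not formal: they require genuine manipulation of the MV-identities and a careful analysis of the ``carrying'' built into the defining formula. Second, the decomposition lemma used for $\Xi\Gamma$ must be proved with care, namely that $g_i=((g-(i-1)u)\vee 0)\wedge u$ really is a good sequence, that it sums back to $g$, and that every positive element of $G$ arises in this unique way. Once these two technical cores are in place, both natural isomorphisms, and hence the categorical equivalence, follow routinely.
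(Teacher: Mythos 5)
The paper itself does not prove this theorem: it is quoted directly from Cignoli--D'Ottaviano--Mundici (Theorems 7.1.2 and 7.1.7 of the cited monograph \cite{mundici 1}), so the only meaningful comparison is with that standard proof, and your sketch follows it exactly --- good sequences with the usual ``carrying'' addition, cancellativity of the resulting monoid, the Grothendieck (Chang) group with strong unit $(1,0,0,\dots)$, the isomorphism $a\mapsto(a,0,0,\dots)$ of $M$ onto $\Gamma\Xi(M)$, and the decomposition $g_i=((g-(i-1)u)\vee 0)\wedge u$ giving $\Xi\Gamma(G,u)\cong(G,u)$. The route, the key lemmas, and the two places you single out as the technical cores (cancellation and the decomposition lemma) are precisely those of the cited proof.

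One genuine error, though not a load-bearing one: your parenthetical claim that every good sequence has the shape $(1,\dots,1,a,0,0,\dots)$ is false in a general MV-algebra; it holds exactly when the algebra is linearly ordered. For instance, in the MV-algebra $[0,1]^2$ the sequence $\bigl((1,\tfrac12),(\tfrac12,0),0,0,\dots\bigr)$ is good, since $(1,\tfrac12)\oplus(\tfrac12,0)=(1,\tfrac12)$, yet it is not of that shape. Nothing later in your argument uses the claim --- your addition formula, the unit, and the identification of $[0,u]$ inside $\Xi(M)$ are all stated for arbitrary good sequences --- but had you relied on it (say, to shortcut the cancellation proof), the argument would collapse precisely in the non-linearly-ordered case, which is the case the theorem is really about. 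Strike the parenthesis, or restrict it explicitly to MV-chains.
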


\noindent
Next theorem states that $\mathcal{MV}$ satisfies the amalgamation property.

\begin{thm}\label{amalgam} \em{\cite[Thm 2.20]{mundici 2}} Given one-to-one homomorphisms
$A\xleftarrow{\alpha} Z\xrightarrow{\beta} B$ of $MV$-algebras, there is an $MV$-algebra $D$ together
with one-to-one homomorphisms $A\xrightarrow{\mu} D\xleftarrow{\nu} B$ such that $\mu\circ \alpha=\nu\circ \beta$.
\end{thm}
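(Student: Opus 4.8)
The plan is to reduce the statement to the amalgamation property for the variety of Abelian $\ell$-groups and then transport the result across the categorical equivalence of Theorem \ref{functor}. First I would apply the functor $\Xi$ to the span $A\xleftarrow{\alpha} Z\xrightarrow{\beta} B$. Since $\Xi$ is one half of a categorical equivalence it carries injective $MV$-homomorphisms to injective unital $\ell$-homomorphisms, so I obtain unital Abelian $\ell$-groups $(G_A,u_A)=\Xi A$, $(G_B,u_B)=\Xi B$, $(G_Z,u_Z)=\Xi Z$ together with one-to-one $\ell$-homomorphisms $G_A\xleftarrow{\Xi\alpha} G_Z\xrightarrow{\Xi\beta} G_B$ that preserve the fixed strong units, so that $\Xi\alpha(u_Z)=u_A$ and $\Xi\beta(u_Z)=u_B$.

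Because $\mathcal{A}$ is not a variety (as noted above it fails to be closed under infinite products), I cannot amalgamate directly in $\mathcal{A}$; instead I would forget the units and invoke the classical amalgamation property of the variety of Abelian $\ell$-groups (a result of Pierce). This yields an Abelian $\ell$-group $H$ and one-to-one $\ell$-homomorphisms $G_A\xrightarrow{\mu_0} H\xleftarrow{\nu_0} G_B$ with $\mu_0\circ\Xi\alpha=\nu_0\circ\Xi\beta$. Evaluating this equality at $u_Z$ gives $\mu_0(u_A)=\mu_0(\Xi\alpha(u_Z))=\nu_0(\Xi\beta(u_Z))=\nu_0(u_B)$; call this common element $u$. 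The object $H$ need carry no strong unit, so I would pass to the $\ell$-subgroup $G$ of $H$ generated by $\mu_0(G_A)\cup\nu_0(G_B)$ and argue that $u$ is a strong unit for $G$.

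The main obstacle, and the only step requiring real care, is this last claim. I would verify that the set $C=\{g\in H\mid |g|\le n u \text{ for some } n\in\mathbb{N}\}$ is an $\ell$-subgroup of $H$: it is closed under negation since $|{-g}|=|g|$, under addition since $|g+h|\le|g|+|h|$, and under the lattice operations since $|g\vee h|,|g\wedge h|\le|g|\vee|h|\le(n\vee m)u$. Because $u_A$ is a strong unit of $G_A$, every $\mu_0(x)$ lies in $C$ (as $x\le ku_A$ forces $\mu_0(x)\le k u$, and likewise for $-x$), and symmetrically every $\nu_0(y)$ lies in $C$; hence $C$ contains the generating set, so $G\subseteq C$, i.e. $u$ is a strong unit of $G$. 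Moreover $\mu_0,\nu_0$ corestrict to injective unital $\ell$-homomorphisms $(G_A,u_A)\to(G,u)$ and $(G_B,u_B)\to(G,u)$ in $\mathcal{A}$.

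Finally I would transport everything back. Applying $\Gamma$ yields the $MV$-algebra $D=\Gamma(G,u)$ together with $MV$-homomorphisms $\Gamma\mu_0$ and $\Gamma\nu_0$; composing with the natural isomorphisms $A\cong\Gamma\Xi A$ and $B\cong\Gamma\Xi B$ supplied by Theorem \ref{functor} produces $\mu\colon A\to D$ and $\nu\colon B\to D$. Injectivity of $\mu,\nu$ follows because $\Gamma$, as a categorical equivalence, preserves monomorphisms (equivalently, $\Gamma$ of an injective unital $\ell$-homomorphism is injective, being its restriction to unit intervals). The identity $\mu\circ\alpha=\nu\circ\beta$ is then just functoriality of $\Gamma$ applied to $\mu_0\circ\Xi\alpha=\nu_0\circ\Xi\beta$ together with the naturality of $\Gamma\Xi\cong\mathrm{id}_{\mathcal{MV}}$, completing the proof.
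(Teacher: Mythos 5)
The paper never proves this theorem: it is quoted directly from Mundici's monograph (\cite[Thm 2.20]{mundici 2}), so there is no internal proof to compare yours against, and your proposal must be judged on its own merits. It is correct, and it follows what is essentially the standard route for this result: push the span into unital Abelian $\ell$-groups with $\Xi$, amalgamate there, and return with $\Gamma$. The two delicate points are both handled properly. Since $\mathcal{A}$ is not a variety, you correctly amalgamate after forgetting the units, in the variety of all Abelian $\ell$-groups, via Pierce's amalgamation theorem; note that this makes your argument a reduction to an external theorem of comparable depth rather than a from-scratch proof, which is legitimate given that the paper itself treats the statement as quoted background. The real content is then the strong-unit repair: the common element $u=\mu_0(u_A)=\nu_0(u_B)$ need not be a strong unit of the whole amalgam $H$, and your device of passing to the $\ell$-subgroup $G$ generated by $\mu_0(G_A)\cup\nu_0(G_B)$ and showing it lies inside the convex $\ell$-subgroup $C=\{g\in H \mid |g|\le nu \mbox{ for some } n\in\mathbb{N}\}$ is exactly the right fix; the inequalities you invoke ($|g+h|\le|g|+|h|$, $|g\vee h|\le |g|\vee|h|$, $nu\vee mu=(n\vee m)u$) are all valid in Abelian $\ell$-groups, and $u\in G$ because $u=\mu_0(u_A)$. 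The only steps left at the level of assertion are that $\Xi$ sends one-to-one $MV$-homomorphisms to one-to-one unital $\ell$-homomorphisms and, conversely, that $\Gamma$ of a one-to-one unital $\ell$-homomorphism is one-to-one; both are true (injectivity on the interval $[0,u]$ propagates to the whole group using the strong unit and the inequality $a\wedge nb\le n(a\wedge b)$ for $a,b\ge 0$), and leaving them unproved is consistent with how the paper itself uses the equivalence of Theorem \ref{functor}.
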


\noindent
An $MV$-algebra $(M;\oplus,',0,1)$ is called {\it divisible} if, for all $a\in M$ and all $n\in\mathbb{N}$, there exists $x\in M$ such that
\begin{itemize}
\item  $n.x=a$.
\item $a'\oplus ((n-1).x)=x'$.
\end{itemize}

Let $(M;\oplus,',0,1)$ be an $MV$-algebra and $(G,u)$ be the unital Abelian $\ell$-group
corresponding to $M$, that is $M=\Gamma(G,u)$. It can be easily seen that $M$ is divisible if and only if, for all $a\in M$ and
for all $n\in\mathbb{N}$, there exists $x\in M$ such that the group element $nx$ is defined in $M$ and $nx=a$. Moreover, $M$ is divisible if and only if
$G$ is divisible (see \cite[Lem. 2.3]{Di Nola} or \cite[Prop 2.13]{16}).
It is possible to show that if $nx=a=ny$, then $x=y$ (see \cite{Dvu2}).
If $(G(M),u)$ is the unital $\ell$-group corresponding to an $MV$-algebra $M$ and $G(M)^d$ is the divisible hull of the
$\ell$-group $G(M)$, then $G(M)^d$ is an $\ell$-group with strong unit $u$ and we use $M^d$ to denote the $MV$-algebra $\Gamma(G(M)^d,u)$.
By \cite{Dvu2}, $M^d$ is a divisible $MV$-algebra containing $M$; we call $M^d$ the {\it divisible hull} of $M$. For more details about divisible $MV$-algebras we recommend to see
\cite{Dvu2,Di Nola,lapenta}.

\begin{defn}\cite{16}\label{injective}  
	An $MV$-algebra $A$ is {\it injective} if each $MV$-homomorphism $h:C\ra A$ from every $MV$-subalgebra $C$ of an $MV$-algebra $B$ into $A$ can be
	extended to an $MV$-homomorphism from $B$ into $A$.   	
\end{defn}

\begin{defn}\cite{Dvu1}\label{summand} 
An ideal $I$ of an $MV$-algebra $M$ is called a {\it summand-ideal} if there exists an ideal $J$ of $M$ such that $\langle I\cup J\rangle=M$ and $I\cap J=\{0\}$,
where $\langle I\cup J\rangle$ is the ideal of $M$ generated by $I\cup J$.
In this case, we write $M=I\boxplus J$. The set of all summand-ideals of $M$ is denoted by $\mathfrak{Sum}(M)$. Evidently, $\{0\}, M \in \mathfrak{Sum}(M)$.
\end{defn}
\section{ Epimorphisms on class of MV-algebras}

In this section, epicomplete objects and an epimorphism in the category of $MV$-algebras are established and their properties are studied. Some
relations between epicomplete $MV$-algebras, $a$-extensions of $MV$-algebras and divisible $MV$-algebras are obtained. We show that any
injective $MV$-algebra is epicomplete. Finally, we prove that an $MV$-algebra is epicomplete if and only if it is divisible.

Recall that a morphism $f:M_1\ra M_2$ of $\mathcal{MV}$ is called an {\it epimorphism} if, for each
$MV$-algebra $M_3$ and all $MV$-homomorphisms $\alpha:M_2\ra M_3$ and $\beta:M_2\ra M_3$,
the condition $\alpha\circ f=\beta\circ f$
implies that $\alpha=\beta$.
An object $M$ of $\mathcal{MV}$ is called {\it epicomplete} if,
for each $MV$-algebra $A$ and for each epimorphism $\alpha:M\ra A$ in $\mathcal{MV}$,
we get $\alpha$ is a surjection.
An {\it epi-extension} for $M$ is an epicomplete object containing $M$ epically.
That is, the inclusion map $i:M\ra E$ is an epimorphism and $E$ is an epicomplete $MV$-algebra.

\begin{defn}\label{2.1}
	Let $M_1$ be a subalgebra of an $MV$-algebra $M_2$. Then $M_2$ is an {\it $a$-extension} of $M_1$ if the map $f: \mathcal I(M_2)\ra \mathcal I(M_1)$ defined by	 $f(J)= J\cap M_1$, $J \in \mathcal I(M_2)$, is a lattice isomorphism. An $MV$-algebra is called {\it $a$-closed} if it has no proper $a$-extension.
\end{defn}

It can be easily seen that $M_2$ is an $a$-extension for $M_1$ if and only if for all $0<y\in M_2$ there are $n\in\mathbb{N}$ and
$0<x\in M_1$ such that $y< n.x$ and $x< n.y$.

\begin{prop}\label{2.2}
	If $f:M_1\ra M_2$ is an epimorphism, then $M_2$ is an $a$-extension for $f(M_1)$.
\end{prop}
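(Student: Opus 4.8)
The plan is to argue by contraposition: assuming $M_2$ is \emph{not} an $a$-extension of $N:=f(M_1)$, I will produce an $MV$-algebra $D$ together with two homomorphisms $\alpha,\beta\colon M_2\ra D$ that agree on $N$ but differ somewhere, contradicting that $f$ is an epimorphism. Note that $N$ is a subalgebra of $M_2$ (being the image of a homomorphism), and that for any $\alpha,\beta\colon M_2\ra D$ one has $\alpha\cc f=\beta\cc f$ exactly when $\alpha$ and $\beta$ agree on $f(M_1)=N$; hence it suffices to find such a separating pair agreeing on $N$.

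First I would extract suitable ideals from the failure of the $a$-extension property. The contraction map $\phi\colon\mathcal I(M_2)\ra\mathcal I(N)$, $\phi(J)=J\cap N$, is always order preserving and preserves intersections, and it is surjective: for an ideal $I$ of $N$ the set $\{z\in M_2:z\le x\text{ for some }x\in I\}$ is an ideal of $M_2$ whose contraction to $N$ is exactly $I$. A routine check shows that such a meet-preserving surjection is a lattice isomorphism precisely when it is injective, so failure of the $a$-extension property supplies two distinct ideals with the same contraction; a standard meet argument then yields a strictly nested pair $J_1\subsetneq J_2$ of ideals of $M_2$ with $J_1\cap N=J_2\cap N$.

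Next I would pass to a quotient to reduce to trivial contraction. Put $\tilde M:=M_2/J_1$, let $q\colon M_2\ra\tilde M$ be the canonical surjection, and set $\tilde N:=q(N)$ and $\tilde J:=q(J_2)=J_2/J_1$. Then $\tilde J\neq\{0\}$ since $J_1\subsetneq J_2$, and $\tilde J\cap\tilde N=\{0\}$: indeed, if $q(x)\in\tilde J$ with $x\in N$, then $x\in J_2\cap N=J_1\cap N\s J_1=\ker q$, whence $q(x)=0$. Consequently, writing $\tilde q\colon\tilde M\ra\tilde M/\tilde J$ for the quotient map, the composite $\tilde q\cc\iota\colon\tilde N\hookrightarrow\tilde M\ra\tilde M/\tilde J$ of the inclusion $\iota$ with $\tilde q$ is one-to-one. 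Now I invoke the amalgamation property (Theorem~\ref{amalgam}) for the two one-to-one homomorphisms $\tilde M\hookleftarrow\tilde N\xrightarrow{\,\tilde q\cc\iota\,}\tilde M/\tilde J$, obtaining $D$ with one-to-one homomorphisms $\mu\colon\tilde M\ra D$ and $\nu\colon\tilde M/\tilde J\ra D$ such that $\mu\cc\iota=\nu\cc\tilde q\cc\iota$, i.e. $\mu$ and $\nu\cc\tilde q$ coincide on $\tilde N$. Setting $\alpha:=\mu\cc q$ and $\beta:=\nu\cc\tilde q\cc q$, for $x\in N$ we get $q(x)\in\tilde N$, so $\alpha(x)=\mu(q(x))=\nu(\tilde q(q(x)))=\beta(x)$; while for any $w\in J_2\setminus J_1$ we have $\beta(w)=\nu(\tilde q(q(w)))=0$ but $\alpha(w)=\mu(q(w))\neq0$, since $q(w)\neq0$ and $\mu$ is injective. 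Thus $\alpha|_N=\beta|_N$ yet $\alpha\neq\beta$, giving the required contradiction.

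I expect the main obstacle to be the reduction to trivial contraction. Failure of the $a$-extension property does not by itself hand over an ideal meeting $N$ trivially: an offending element of $M_2$ may be archimedean-incomparable to $N$ from ``between'' existing classes rather than being infinitesimal, so the naive principal ideal it generates can still meet $N$ nontrivially. The quotient by $J_1$ is exactly what converts the nested pair with equal contraction into a \emph{nonzero} ideal with \emph{trivial} contraction, which is precisely the configuration the amalgamation property needs in order to separate the two homomorphisms.
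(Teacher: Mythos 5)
Your proof is correct and, at its core, is the same as the paper's: both arguments amalgamate the two quotients of $M_2$ by ideals having equal trace on $f(M_1)$ (your $\tilde M = M_2/J_1$ and $\tilde M/\tilde J \cong M_2/J_2$) over the common image of $f(M_1)$, and then play the resulting pair of homomorphisms into the amalgam against the epimorphism property of $f$. The only differences are presentational: the paper argues directly (equal traces imply equal ideals, via the Third Isomorphism Theorem and a kernel computation) rather than by contraposition, and it leaves implicit the reduction you spell out, namely that the trace map $J\mapsto J\cap f(M_1)$ is automatically a meet-preserving surjection, so that only injectivity is at stake.
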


\begin{proof}
Let $I$ and $J$ be two ideals of $M_2$ such that $I\cap f(M_1)=J\cap f(M_1)$. Then by the Third Isomorphism Theorem \cite[Thm 6.18]{burris}, we get that
\begin{eqnarray}\label{R2.1}
\frac{M_2}{J}\supseteq\frac{f(M_1)+J}{J}\cong \frac{f(M_1)}{J\cap f(M_1)}=\frac{f(M_1)}{I\cap f(M_1)}\cong \frac{f(M_1)+I}{I}\s \frac{M_2}{I}.
\end{eqnarray}
Let $\alpha_I:\frac{f(M_1)}{I\cap f(M_1)}\ra \frac{M_2}{I}$ and $\alpha_J:\frac{f(M_1)}{J\cap f(M_1)}\ra \frac{M_2}{J}$  be the canonical morphisms induced from (\ref{R2.1}). Then by the amalgamation property (Theorem \ref{amalgam}), there exist an $MV$-algebra $A$ and homomorphisms $\beta_I:\frac{M_2}{I}\ra A$ and  $\beta_J:\frac{M_2}{J}\ra A$ such that  $\beta_I\circ\alpha_I=\beta_J\circ\alpha_J$. Consider the
following maps
\begin{eqnarray*}
\mu_I:M_2\xrightarrow{\pi_I}\frac{M_2}{I}\xrightarrow{\beta_I}A,\quad
\mu_J:M_2\xrightarrow{\pi_J}\frac{M_2}{J}\xrightarrow{\beta_J}A,
\end{eqnarray*}
where $\pi_I$ and $\pi_J$ are the natural projection homomorphisms. For all $x\in M_1$,
\begin{eqnarray*}
\mu_I(f(x))=\beta_I(\frac{f(x)}{I})=\beta_I(\alpha_I(\frac{f(x)}{I\cap f(M_1)}))=\beta_J(\alpha_J(\frac{f(x)}{J\cap f(M_1)}))=\beta_J(\frac{f(x)}{J})=\mu_J(f(x)).
\end{eqnarray*}
It follows that $\mu_I\circ f=\mu_J\circ f$ and so by the assumption $\mu_I=\mu_J$, which implies that $I=J$.
Therefore, $M_2$ is an $a$-extension for $f(M_1)$.
\end{proof}

\noindent
The next theorem helps us to prove Corollaries \ref{2.3.0} and \ref{2.3.1}.

\begin{thm}\label{2.3}
Let $M_1$ be a subalgebra of an $MV$-algebra $(M_2;\oplus,',0,1)$ such that $M_2$ is an $a$-extension of $M_1$ and $M_1+I=M_2$ for all $I\in Min(M_2)$. Then $M_1=M_2$.
\end{thm}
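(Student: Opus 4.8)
The plan is to prove the contrapositive-flavoured statement directly: I fix an arbitrary $y\in M_2$ and show $y\in M_1$. For $x\in M_1$ put $d(x):=(x\ominus y)\vee(y\ominus x)$; passing to the unital $\ell$-group this is just $|y-x|$, so $d(x)=0$ exactly when $x=y$, and for a prime ideal $I$ we have $(x,y)\in\theta_I$ iff $d(x)\in I$. Hence $y\in M_1+I$ iff $d(x)\in I$ for some $x\in M_1$, and the hypothesis $M_1+I=M_2$ for all $I\in Min(M_2)$ says exactly that every minimal prime ideal meets the family $D:=\{d(x):x\in M_1\}$. Recalling the standard fact $\bigcap Min(M_2)=\{0\}$ (so $M_2$ is a subdirect product of the chains $M_2/I$), the argument splits according to whether the members of $D$ have a common minimal prime avoiding all of them.

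First I would eliminate the possibility that every finite meet $d(x_1)\wedge\cdots\wedge d(x_k)$ is nonzero. In that situation the meet-closure $\widehat D$ of $D$ is a $\wedge$-closed subset of $M_2$ missing $0$, so by Zorn's lemma there is an ideal $I^\ast$ maximal with respect to $I^\ast\cap\widehat D=\emptyset$; by the usual separation lemma for MV-algebras such an $I^\ast$ is prime. Choosing a minimal prime $I_0\subseteq I^\ast$ (every prime contains a minimal one), we still have $I_0\cap D=\emptyset$, i.e. $d(x)\notin I_0$ for all $x\in M_1$, whence $y\notin M_1+I_0=M_2$, a contradiction. Thus some finite subfamily satisfies $d(x_1)\wedge\cdots\wedge d(x_k)=0$ with $x_1,\dots,x_k\in M_1$. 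It is worth noting that this is the \emph{only} step where the hypothesis $M_1+I=M_2$ enters; the rest relies solely on $a$-extension.

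It remains to deduce $y\in M_1$ from $\bigwedge_{i=1}^k d(x_i)=0$, which I expect to be the technical heart. I would induct on $k$, working in the $\ell$-group $(G_2,u)$ with $M_2=\Gamma(G_2,u)$ of Theorem \ref{functor} and its subgroup giving $M_1$. The case $k=1$ is trivial. For $k=2$, set $p=x_1\wedge x_2$, $q=x_1\vee x_2$ and $g=q-p\in M_1$. Since $d(x_1)\wedge d(x_2)=0$, at each minimal prime $y$ equals $x_1$ or $x_2$ in $M_2/I$, so $p\le y\le q$ and $a:=y-p$ is a component of $g$ (that is, $a\wedge(g-a)=0$) with $0\le a\le g$. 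If $a=0$ then $y=p\in M_1$; otherwise the characterisation of $a$-extensions recorded after Definition \ref{2.1}, applied to $0<a\in M_2$, yields $0<w\in M_1$ and $n\in\mathbb{N}$ with $a\le n.w$ and $w\le n.a$. A check at each minimal prime then gives $a=g\wedge(n.w)$: where $a$ vanishes so does $w$ (hence $n.w$), while where $a=g$ the inequality $a\le n.w$ forces $g\wedge(n.w)=g$. As $g,n.w\in M_1$, this puts $a\in M_1$, so $y=p\oplus a\in M_1$.

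For the inductive step I would \emph{merge} the last two elements into one member of $M_1$. With $p,q,g$ now formed from $x_{k-1},x_k$, let $R$ be the set of minimal primes at which $x_{k-1}$ or $x_k$ agrees with $y$, and form the truncation $\tilde a:=((y-p)\vee 0)\wedge g$, which on $R$ records the same component pattern of $g$ as $y$. Applying the $a$-extension characterisation to $\tilde a$ to get $w\in M_1$ and $n$, I set $x^\ast:=p\oplus(g\wedge(n.w))\in M_1$; the same prime-by-prime comparison shows $d(x^\ast)\in I$ for every $I\in R$, so $x^\ast$ matches $y$ wherever $x_{k-1}$ or $x_k$ did. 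Replacing the pair $x_{k-1},x_k$ by $x^\ast$ therefore keeps $d(x_1)\wedge\cdots\wedge d(x_{k-2})\wedge d(x^\ast)=0$, lowering $k$ and closing the induction. The crux throughout is the identity $a=g\wedge(n.w)$: it is precisely the $a$-extension hypothesis that furnishes the uniform multiplier $n$ turning the purely order-theoretic ``component of $g$'' into an actual element of $M_1$, and carrying this bookkeeping consistently across all minimal primes at once is the main obstacle I anticipate.
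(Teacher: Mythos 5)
Your proposal is correct, and it takes a genuinely different route from the paper's proof. The paper argues by contradiction, localizing at a single prime: given $b\in M_2\setminus M_1$, it forms the $\wedge$-closed set $S=\{x\ominus b\mid x\in M_1,\ x\vee b\in M_1,\ x\ominus b>0\}$, uses a maximal lattice filter to produce a minimal prime $P$ of $M_1$, lifts it to some $Q\in Min(M_2)$ via the ideal-lattice isomorphism of Definition \ref{2.1}, invokes $M_1+Q=M_2$ to get $a\in M_1$ with $a/Q=b/Q$, and then a three-case analysis with the multiplier characterization of $a$-extensions manufactures an element of $S\cap Q$, a contradiction. You work directly and globally: the hypothesis $M_1+I=M_2$ enters only through the compactness step (if every finite meet of the $d(x_i)$ were nonzero, the separation lemma plus the fact that every prime contains a minimal prime would yield $I_0\in Min(M_2)$ with $y\notin M_1+I_0$), and the rest is an induction in which the identity $a=g\wedge(n.w)$ --- verified coordinate by coordinate over $Min(M_2)$ and pulled back through $\bigcap Min(M_2)=\{0\}$ --- converts a component of an element of $M_1$ into an actual element of $M_1$. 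What the paper's route buys is brevity once the filter machinery is in place, at the price of transporting minimal primes between $\mathcal I(M_1)$ and $\mathcal I(M_2)$; what your route buys is that everything happens inside $Min(M_2)$, the $a$-extension hypothesis is used only in its elementwise multiplier form, the exact role of the hypothesis $M_1+I=M_2$ is isolated in one step, and one gets the extra information that every $y\in M_2$ arises by patching finitely many elements of $M_1$. One small repair: the multiplier characterization applies only to strictly positive elements, so in the merging step you must dispose of the degenerate case $\tilde a=0$ separately (take $x^\ast=p$; the same coordinatewise check gives $d(x^\ast)\in I$ for all $I\in R$), just as you did with $a=0$ in the base case. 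That is a one-line fix, not a gap.
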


\begin{proof}
Choose $b\in M_2 \setminus M_1$ and set $S:=\{x\ominus b\mid x\in M_1,~ x\vee b\in M_1~ and ~ x\ominus b>0 \}$.
 Clearly, $S\neq\emptyset$ and  $0\notin S$.
First we show that $S$ is closed under $\wedge$.
Let $x,y\in M_1$ be such that $x\ominus b,y\ominus b\in S$. Then $x\vee b,y\vee b\in M_1$ and $x\ominus b,y\ominus b>0$. We claim that $(x\wedge y)\ominus b>0$. From \cite[Props 1.15, 1.16, 1.21, 1.22]{georgescu}
it follows that $(x\wedge y)\ominus b=(x\ominus b)\wedge (y\ominus b)$.

If $(x\wedge y)\ominus b=0$, then
\begin{eqnarray*}
x\wedge y\leq b\Rightarrow (x\wedge y)\vee b=b\Rightarrow (x\vee b)\wedge (y\vee b)=b
\end{eqnarray*}
but $(x\vee b)\wedge (y\vee b)\in M_1$(since $x\vee b,y\vee b\in M_1$), which is a contradiction. So $0<(x\wedge y)\ominus b$.
Similarly, we can show that $(x\wedge y)\vee b\in M_1$.
Hence $(x\ominus b)\wedge (y\ominus b)\in S$. It follows that
there is a proper lattice filter of $M_1$ containing $S$ which implies that  there exists a maximal lattice filter of $M_1$ containing
$S$, say $\overline{S}$, whence $\overline{S}=M_1\setminus P$ for some minimal prime lattice filter $P$ of $M_1$.
By \cite[Cor 6.1.4]{mundici 1}, $P$ is a minimal prime filter of $M_1$, and so there exists $Q\in Min(M_2)$ such that $P=Q\cap M_1$.
By the assumption and by the Third Isomorphism Theorem,
$$
\frac{M_1}{Q\cap M_1}\cong \frac{M_1+Q}{Q}\cong \dfrac{M_2}{Q}.
$$
Then there exists $a\in M_1$ such that  $a/Q=b/Q$, so $b\ominus a,a\ominus b\in Q$. Clearly, $(b\ominus a)\vee (a\ominus b)\neq 0$
(otherwise, $b=a\in M_1$ which is a contradiction).

\noindent
(i) If  $a\ominus b=0$, then $b\ominus a>0$. Let $0<b\ominus a=t\in Q$. Then there are $n\in\mathbb{N}$ and $z\in M_1$
such that $t<n.z$ and $z<n.t$, so $z,n.z\in Q$ which implies that $n.z\in Q\cap M_1$. From $b\ominus a\leq n.z$, we have
$b\leq a\oplus n.z$. Clearly, $b<a\oplus n.z$ (since $a\oplus n.z\in M_1$). Thus $(a\oplus n.z)\ominus b>0$ and
$(a\oplus n.z)\vee b=a\oplus n.z\in M_1$ and hence by definition
\begin{eqnarray}
\label{R 2.2}(a\oplus n.z)\ominus b\in S.
\end{eqnarray}
On the other hand, in view of
$\frac{(a\oplus n.z)\ominus b}{Q}=(\frac{a}{Q}\oplus \frac{n.z}{Q})\ominus \frac{b}{Q}=
\frac{a}{Q}\ominus \frac{b}{Q}=\frac{0}{Q}$, we get 
\begin{eqnarray}
\label{R 2.3}(a\oplus n.z)\ominus b\in Q.
\end{eqnarray}
From relations (\ref{R 2.2}) and (\ref{R 2.3}) it follows that $(a\oplus n.z)\ominus b\in S\cap Q$ which is a contradiction.

(ii) If $b\ominus a=0$, then $b\leq a$ and $a\ominus b>0$, so $a\ominus b\in S\cap Q$
(note that $b\vee a=a\in M_1$) which is a contradiction.

(iii) If $b\ominus a>0$ and $a\ominus b>0$, then $a\ominus b=t\in Q$, so similarly to (i)
there are $n\in\mathbb{N}$ and $z\in M_1$  such that $a\ominus b<n.z\in Q\cap M_1$.
It follows that $a\ominus n.z\leq b$. Since $a\ominus n.z\in M_1$, we have $a\ominus n.z<b$. Hence, $(a\ominus n.z)\ominus b=0$,
$\frac{a\ominus n.z}{Q\cap M_1}=\frac{a}{Q\cap M_1}$ and $\frac{a\ominus n.z}{P}=\frac{b}{P}$.
Now, we return to (i) and replace $a$ with $a\ominus n.z$. Then we get another contradiction.
Therefore, the assumption was incorrect and there is no $b\in M_2\setminus M_1$. That is, $M_2=M_1$.
\end{proof}

\begin{cor}\label{2.3.0}
An $MV$-algebra $(A;\oplus,',0,1)$ is epicomplete if and only if for each epimorphism $f:A\ra B$, we have $\im(f)+I=B$ for all $I\in Min(B)$.
\end{cor}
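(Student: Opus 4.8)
The plan is to derive this corollary directly from Proposition \ref{2.2} and Theorem \ref{2.3}, handling the two implications separately. The forward implication should be essentially formal, while the reverse implication is where the two preceding results do the real work. Throughout I would take $M_1=\im(f)$ and $M_2=B$, noting first that $\im(f)$ is a subalgebra of $B$, since it is the image of an $MV$-homomorphism, so that both $M_1+I$ and the hypotheses of Theorem \ref{2.3} make sense.

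For the implication $(\Ra)$, suppose $A$ is epicomplete and let $f:A\ra B$ be any epimorphism. By the definition of epicompleteness $f$ is surjective, so $\im(f)=B$. Then for every $I\in Min(B)$ we have $\im(f)+I=B+I=B$, because $B+I=\bigcup_{x\in B}x/I=B$. Hence the stated condition holds trivially, and no use of the deeper machinery is needed here.

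For the converse $(\La)$, assume that for each epimorphism $f:A\ra B$ we have $\im(f)+I=B$ for all $I\in Min(B)$, and let $f:A\ra B$ be an arbitrary epimorphism; I must show that $f$ is a surjection. By Proposition \ref{2.2}, $B$ is an $a$-extension of the subalgebra $\im(f)$. Combining this with the standing hypothesis $\im(f)+I=B$ for every $I\in Min(B)$, the pair $(\im(f),B)$ satisfies precisely the two assumptions of Theorem \ref{2.3}. Applying that theorem with $M_1=\im(f)$ and $M_2=B$ yields $\im(f)=B$, i.e. $f$ is onto. As $f$ was an arbitrary epimorphism out of $A$, the algebra $A$ is epicomplete.

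The only point requiring care, and the one I would flag as the main (if modest) obstacle, is checking that the hypotheses translate correctly: the $a$-extension condition of Theorem \ref{2.3} is supplied automatically by Proposition \ref{2.2}, while the remaining condition $M_1+I=M_2$ for all $I\in Min(M_2)$ is exactly the condition $\im(f)+I=B$ assumed in the corollary. Once this correspondence is in place the result is immediate, and in particular no further computation inside $B$ is required.
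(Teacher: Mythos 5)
Your proof is correct and is exactly the argument the paper intends: the paper's proof of this corollary consists of the single line that it follows from Proposition \ref{2.2} and Theorem \ref{2.3}, and your write-up supplies precisely that deduction (trivial forward direction via surjectivity, and the reverse direction by feeding Proposition \ref{2.2}'s $a$-extension conclusion together with the hypothesis $\im(f)+I=B$ into Theorem \ref{2.3} with $M_1=\im(f)$, $M_2=B$). Nothing is missing.
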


\begin{proof}
The proof is straightforward by Proposition \ref{2.2} and Theorem \ref{2.3}.
\end{proof}

\begin{cor}\label{2.3.1}
An $MV$-algebra $(M;\oplus,',0,1)$ is divisible if and only if $M/P$ is divisible for each $P\in Min(M)$.
\end{cor}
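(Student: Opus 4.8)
The forward implication I would dispose of first, since it is routine: divisibility passes to quotients. If $a\in M$, $n\in\natural$, and $x\in M$ witnesses divisibility (so $n.x=a$ and $a'\oplus((n-1).x)=x'$), then applying the canonical surjection $M\ra M/P$ shows that $x/P$ witnesses divisibility of $a/P$. Hence $M/P$ is divisible for every $P\in Min(M)$ whenever $M$ is divisible.

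For the converse, the plan is to place $M$ inside its divisible hull $M^d$ and to apply Theorem \ref{2.3} to the inclusion $M\le M^d$. Since $M^d$ is divisible, it suffices to force $M=M^d$, and for that I must verify the two hypotheses of Theorem \ref{2.3}: that $M^d$ is an $a$-extension of $M$, and that $M+I=M^d$ for every $I\in Min(M^d)$. The natural setting for both is the $\ell$-group side, so I would write $M=\Gamma(G,u)$ and $M^d=\Gamma(G^d,u)$ with $G^d$ the divisible hull of $G$, and transport everything through $\Gamma$.

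The $a$-extension claim is where I expect the main technical work to lie. The key $\ell$-group fact is that $H\mapsto H^d$ (divisible hull computed inside $G^d$) and $K\mapsto K\cap G$ are mutually inverse lattice isomorphisms between the convex $\ell$-subgroups of $G$ and those of $G^d$. This rests on two observations: convex $\ell$-subgroups are pure, i.e. $ng\in H\Ra g\in H$ (since $|g|\le n|g|\in H$ and $H$ is convex), which gives $G\cap H^d=H$; and a convex $\ell$-subgroup $K$ of the divisible group $G^d$ is itself divisible (if $|x|\in K$ then $|x/n|\le|x|$ forces $x/n\in K$), which gives $K=(K\cap G)^d$. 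Translating through $\Gamma$, these say precisely that $J\mapsto J\cap M$ is a lattice isomorphism $\mathcal I(M^d)\ra\mathcal I(M)$, so $M^d$ is an $a$-extension of $M$ in the sense of Definition \ref{2.1}.

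For the second hypothesis I would fix $I\in Min(M^d)$, let $\mathfrak p\s G^d$ be the corresponding prime convex $\ell$-subgroup, and set $H=\mathfrak p\cap G$ and $P=I\cap M$. As above $\mathfrak p$ is divisible with $\mathfrak p=H^d$, and purity gives $G\cap\mathfrak p=H$, so $G/H\hookrightarrow G^d/\mathfrak p$ is an injection into a divisible group in which every element has a multiple lying in $G/H$; hence $G^d/\mathfrak p=(G/H)^d$, and applying $\Gamma$ yields $M^d/I\cong (M/P)^d$. Now $P$ is prime, because $M/P\cong(M+I)/I$ is a subalgebra of the chain $M^d/I$; thus $P$ contains some $P_0\in Min(M)$, and since $M/P_0$ is divisible by hypothesis and $M/P$ is a quotient of $M/P_0$, the algebra $M/P$ is divisible. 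Therefore $(M/P)^d=M/P$, giving $M^d/I=M/P=(M+I)/I$, i.e. $M+I=M^d$. With both hypotheses confirmed, Theorem \ref{2.3} gives $M=M^d$, so $M$ is divisible.
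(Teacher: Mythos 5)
Your proof is correct and follows the paper's own strategy: both reduce the nontrivial direction to Theorem \ref{2.3} applied to the inclusion $M\le M^d$, verifying that $M^d$ is an $a$-extension of $M$ and that $M+I=M^d$ for every $I\in Min(M^d)$. The only divergence is in the bookkeeping: the paper checks the $a$-extension property via the elementwise criterion following Definition \ref{2.1} and applies the hypothesis directly to $I\cap M\in Min(M)$, whereas you transport both verifications to the $\ell$-group side (purity of convex $\ell$-subgroups giving the lattice isomorphism $\mathcal I(M^d)\cong \mathcal I(M)$, and the identification $M^d/I\cong (M/P)^d$), which has the incidental merit of making precise the paper's terse parenthetical that $M^d/I$ is a ``divisible extension'' of $(M+I)/I$, and of needing only that $I\cap M$ is a prime ideal containing some minimal prime rather than that it is itself minimal.
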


\begin{proof}
Consider the divisible hull  $M^d$ of the $MV$-algebra $M$. For each $n\in \mathbb{N}$ and
each $y\in M^d$, there exists $x\in X$ such that $n.x=y$. It entails that $y<(n+1).x$ and
$x<(n+1).y$. So, $M^d$ is an $a$-extension for $M$. It follows that $Min(M)=\{P\cap M\mid P\in Min(M^d)\}$.
Moreover, for each $P\in Min(M^d)$, we have
$\frac{M}{P\cap M}=\frac{P+M}{P}\subseteq \frac{M^d}{P}$ and so by the assumption $\frac{P+M}{P}$ is
divisible. It follows that $\frac{P+M}{P}=\frac{M^d}{P}$ (since $\frac{M^d}{P}$ is a divisible extension of
$\frac{P+M}{P}$), hence $P+M=M^d$. Now, by Theorem \ref{2.3},  we conclude that $M=M^d$. Therefore,
$M$ is divisible. The proof of the converse is straightforward.
\end{proof}

In Theorem \ref{2.5},
we show a condition  under which an $MV$-algebra is $a$-closed.

\begin{rmk}\label{2.4}
If $M_2$ is an $a$-extension for $MV$-algebra $M_1$, then for all $I\in I(M_2)$, the $MV$-algebra $\frac{M_2}{I}$ is an
$a$-extension for the $MV$-algebra $\frac{M_1+I}{I}$.
\end{rmk}

\begin{defn}\label{2.6}
	An ideal $I$ of an $MV$-algebra $(M;\oplus,',0,1)$ is called an $a$-ideal if $\frac{M}{I}$ is an $a$-closed $MV$-algebra. Clearly, $M$ is an $a$-closed ideal of $M$. Moreover, $M$ is $a$-closed if and only if $\{0\}$ is an $a$-closed ideal.
\end{defn}

\begin{thm}\label{2.5}
If every minimal prime ideal of an $MV$-algebra $(M;\oplus,',0,1)$ is $a$-closed, then $M$ is $a$-closed.
\end{thm}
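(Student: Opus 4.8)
The plan is to deduce the statement from Theorem~\ref{2.3}. Let $M_2$ be an arbitrary $a$-extension of $M_1:=M$; it suffices to prove $M_1=M_2$, for then $M$ admits no proper $a$-extension and is $a$-closed by definition. Theorem~\ref{2.3} already supplies exactly the implication I want, provided I can verify its remaining hypothesis, namely that $M_1+I=M_2$ for every $I\in Min(M_2)$. So the whole argument reduces to establishing this single equation at each minimal prime ideal of $M_2$.

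First I would fix $I\in Min(M_2)$ and set $P:=I\cap M_1$. Because $M_2$ is an $a$-extension of $M_1$, the map $f:\mathcal I(M_2)\ra\mathcal I(M_1)$, $f(J)=J\cap M_1$, is a lattice isomorphism. Since being a minimal prime ideal is a property of an element of the ideal lattice expressible purely order-theoretically (a proper ideal $Q$ is prime exactly when $J\cap K\s Q$ forces $J\s Q$ or $K\s Q$, and minimality is minimality among such elements), $f$ must send $Min(M_2)$ into $Min(M_1)$; this is the same correspondence already exploited in the proof of Corollary~\ref{2.3.1}. Hence $P\in Min(M_1)$, and by hypothesis the quotient $M_1/P$ is $a$-closed.

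Next I would pass to the quotient modulo $I$. By Remark~\ref{2.4}, the fact that $M_2$ is an $a$-extension of $M_1$ gives that $M_2/I$ is an $a$-extension of $(M_1+I)/I$. The Third Isomorphism Theorem yields $(M_1+I)/I\cong M_1/(I\cap M_1)=M_1/P$, exactly as in the chain of isomorphisms used in Proposition~\ref{2.2}, so $(M_1+I)/I$ is $a$-closed. But an $a$-closed $MV$-algebra has no proper $a$-extension, whence its $a$-extension $M_2/I$ must coincide with it: $M_2/I=(M_1+I)/I$, that is, $M_1+I=M_2$. As $I\in Min(M_2)$ was arbitrary, Theorem~\ref{2.3} now gives $M_1=M_2$, and since $M_2$ was an arbitrary $a$-extension of $M$, the algebra $M$ is $a$-closed.

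The quotient isomorphism and the appeal to Remark~\ref{2.4} are routine bookkeeping. The one point I expect to need genuine care is the claim that $f$ carries minimal prime ideals to minimal prime ideals; I would justify it entirely order-theoretically, checking that ``$Q$ is a minimal prime ideal of $M_i$'' is a statement about the position of $Q$ inside the lattice $\mathcal I(M_i)$ alone, and is therefore transported intact by any lattice isomorphism.
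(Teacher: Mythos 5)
Your proof is correct and follows essentially the same route as the paper's: take an arbitrary $a$-extension $M_2$ of $M$, use Remark \ref{2.4} and the Third Isomorphism Theorem to identify $(M_1+I)/I\cong M_1/(I\cap M_1)$ as an $a$-closed algebra for each $I\in Min(M_2)$, conclude $M_1+I=M_2$, and finish with Theorem \ref{2.3}. The only difference is that you spell out the lattice-theoretic transfer of minimal prime ideals under the isomorphism $J\mapsto J\cap M_1$, a point the paper uses implicitly here (and in Corollary \ref{2.3.1}) without justification.
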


\begin{proof}
Let $A$ be an $a$-extension for $M$. For each $P\in Min(A)$, we have $\frac{M}{P\cap M}\cong \frac{M+P}{P}\xrightarrow{\s} \frac{A}{P}$. By the above remark,
$\frac{A}{P}$ is an $a$-extension for $\frac{M+P}{P}$. Since $\frac{M+P}{P}$ is $a$-closed, then $\frac{M+P}{P}=\frac{A}{P}$. Now, from Theorem \ref{2.3},
it follows that $M=A$.
\end{proof}

Clearly, the converse of Theorem \ref{2.5} is true, when $M$ is linearly ordered. Indeed, if $M$ is a chain, $\{0\}$ is the only minimal
prime ideal of $M$ and so $M\cong \frac{M}{\{0\}}$ is $a$-closed. In the following proposition and corollary,
we try to find a better condition under which the converse of Theorem \ref{2.5} is true.

\begin{prop}\label{2.7}
If $I$ is a summand ideal of an $a$-closed $MV$-algebra $(M;\oplus,',0,1)$, then $\frac{M}{I}$ is $a$-closed.
\end{prop}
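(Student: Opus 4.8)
The plan is to convert the summand decomposition $M=I\boxplus J$ into an internal direct product and then reduce the claim to the statement that a direct factor of an $a$-closed $MV$-algebra is again $a$-closed. First I would fix an ideal $J$ with $\langle I\cup J\rangle=M$ and $I\cap J=\{0\}$, and show that the natural map $\phi\colon M\ra \frac{M}{I}\times\frac{M}{J}$, $\phi(x)=(x/I,x/J)$, is an $MV$-isomorphism. Injectivity is immediate from $I\cap J=\{0\}$, since the kernel consists of those $x$ lying in both $I$ and $J$. For surjectivity I would use the $MV$-analogue of the Chinese Remainder Theorem: from $\langle I\cup J\rangle=M$ one gets $1=p\oplus q$ with $p\in I$ and $q\in J$, so $p/I=0$, $q/I=1$, $p/J=1$, $q/J=0$, and then for arbitrary $a,b\in M$ the element $x=(a\wedge q)\vee(b\wedge p)$ satisfies $x/I=a/I$ and $x/J=b/J$. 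Hence $M\cong \frac{M}{I}\times\frac{M}{J}$, and it suffices to prove that the direct factor $\frac{M}{I}$ of the $a$-closed algebra $M$ is $a$-closed.

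To deal with direct factors I would record the elementary fact that every ideal $K$ of a product $A\times B$ of $MV$-algebras splits as $K=K_A\times K_B$ with $K_A\in\mathcal I(A)$ and $K_B\in\mathcal I(B)$: if $a\in\pi_A(K)$ and $b\in\pi_B(K)$, then $(a,0)$ and $(0,b)$ lie below elements of $K$ and hence, $K$ being a down-set, belong to $K$, so that $(a,b)=(a,0)\oplus(0,b)\in K$ by closure under $\oplus$. Consequently $\mathcal I(A\times B)\cong \mathcal I(A)\times\mathcal I(B)$ as lattices, meets and joins being computed coordinatewise.

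The key step is then the following: if $A_1\le A_1'$ is an $a$-extension, then for every $MV$-algebra $A_2$ the inclusion $A_1\times A_2\le A_1'\times A_2$ is again an $a$-extension. Under the identifications above, the contraction map $K\mapsto K\cap(A_1\times A_2)$ becomes $(K_1,K_2)\mapsto(K_1\cap A_1,K_2)$, that is, the product of the lattice isomorphism $\mathcal I(A_1')\to\mathcal I(A_1)$ supplied by the $a$-extension hypothesis with the identity on $\mathcal I(A_2)$; a product of lattice isomorphisms is a lattice isomorphism, so the contraction map is a lattice isomorphism, and the extension is proper precisely when $A_1\subsetneq A_1'$.

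Finally I would argue by contradiction: if $\frac{M}{I}$ admitted a proper $a$-extension $\frac{M}{I}\le N$, then applying the previous step with $A_2=\frac{M}{J}$ would produce a proper $a$-extension $M\cong\frac{M}{I}\times\frac{M}{J}\le N\times\frac{M}{J}$ of $M$, contradicting the $a$-closedness of $M$. Hence $\frac{M}{I}$ has no proper $a$-extension and is $a$-closed. I expect the main obstacle to be the first paragraph, namely producing the internal direct product decomposition and in particular the Chinese-Remainder surjectivity, together with the bookkeeping that identifies the contraction of ideals along $A_1\times A_2\le A_1'\times A_2$ with a coordinatewise product of lattice maps; the remaining verifications are routine.
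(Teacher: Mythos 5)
Your proof is correct, and its skeleton matches the paper's: realize $M$ as a direct product having $\frac{M}{I}$ as one factor, show that a proper $a$-extension of that factor yields a proper $a$-extension of the product (hence, by transport of structure along the isomorphism, of $M$ itself), and conclude from the $a$-closedness of $M$. The routes to that skeleton differ in two ways. First, the paper gets the decomposition from the structure theorem for summand ideals (\cite[Cor 3.5]{Dvu1}): it writes $I=\downarrow a$ for a boolean element $a$, with $I^{\bot}=\downarrow a'$ and $M=\downarrow a\oplus\downarrow a'$, and then equips $\downarrow a'$ by hand with an $MV$-structure to obtain $M\cong\frac{M}{I}\times\frac{M}{I^{\bot}}$; you instead argue directly from Definition \ref{summand}, extracting $1=p\oplus q$ with $p\in I$, $q\in J$ from $\langle I\cup J\rangle=M$ and running a Chinese-remainder computation to show $x\mapsto(x/I,x/J)$ is onto, with injectivity from $I\cap J=\{0\}$. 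This is more self-contained (no external structure theorem), at the cost of identifying the complementary factor only as $\frac{M}{J}$ rather than concretely as $I^{\bot}$. Second, the paper merely asserts (its item (2)) that $A\times\frac{M}{I^{\bot}}$ is an $a$-extension of $\frac{M}{I}\times\frac{M}{I^{\bot}}$, whereas you actually prove the corresponding lemma via the splitting $\mathcal{I}(A\times B)\cong\mathcal{I}(A)\times\mathcal{I}(B)$ and the observation that contraction of ideals then acts coordinatewise; this fills a detail the paper leaves to the reader. One shared loose end: both you and the paper use without comment that $a$-closedness is invariant under $MV$-isomorphism; this is routine (transport of structure) but deserves a sentence in a polished write-up.
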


\begin{proof}
Let $M$ be an $a$-closed $MV$-algebra and $I$ be a summand ideal of $M$. By \cite[Cor 3.5]{Dvu1},
there exists $a\in B(M)$ such that $I=\downarrow a$, $I^{\bot}=\downarrow a'$ and $M=\downarrow a\oplus \downarrow a' :=\{x\oplus y \mid x\in \downarrow a, \, y \in \downarrow a'\}$. Moreover, for each
$x\in M$, there are $x_1\leq a$ and $x_2\leq a'$ such that $x=x_1\oplus x_2$ and so $x/I=x_2/I$. Hence for each $x,y\in M$,
\begin{eqnarray*}
x/I=y/I&\Leftrightarrow & x_2/I=y_2/I\Leftrightarrow x_2\ominus y_2,y_2\ominus x_2\in I \\
&\Rightarrow & x_2\ominus y_2\leq x_2\in I^{\bot},\  y_2\ominus x_2\leq y_2\in I^{\bot}\\
&\Rightarrow & x_2\ominus y_2,y_2\ominus x_2\in I\cap I^{\bot}=\{0\} \Rightarrow x_2=y_2.
\end{eqnarray*}
That is, $\frac{M}{I}=\{x/I|\ x\in I^{\bot}\}$. Now, we define the operations $\boxplus$ and $*$ on $\downarrow a'$ by $x\boxplus y=x\oplus y$ and $x^{*}=t$, where $t$ is the second component of $x'$ in $\downarrow a\oplus \downarrow a'$. It can be easily seen that $I^\bot$ with these operations and $0$ and $a'$ as the least and greatest elements, respectively, is an $MV$-algebra. Moreover, $\frac{M}{I}\cong I^{\bot}$. Similarly, $\frac{M}{I^\bot}$ is an $MV$-algebra and $I=\downarrow a\cong \frac{M}{I^\bot}$. Now, let $A$ be an $a$-extension for the $MV$-algebra $\frac{M}{I}$. Then
\begin{eqnarray*}
\phi: M\xrightarrow{x\mapsto x_1\oplus x_2}\downarrow a\oplus \downarrow a'\xrightarrow{x\oplus y\mapsto (x/I,y/I^\bot)}  \frac{M}{I}\times \frac{M}{I^\bot}\xrightarrow{\s} A\times \frac{M}{I^\bot}.
\end{eqnarray*}
(1) Since $M\cong \frac{M}{I} \times \frac{M}{I^\bot}$, then $\frac{M}{I}\times \frac{M}{I^\bot}$ is $a$-closed.

\noindent
(2)  $A\times \frac{M}{I^\bot}$ is an $a$-extension for $\frac{M}{I}\times \frac{M}{I^\bot}$.

It follows that $\frac{M}{I}\times \frac{M}{I^\bot}=A\times \frac{M}{I^\bot}$ and so $A=\frac{M}{I^\bot}$.
In a similar way, we can show that $M/I$ is $a$-closed.
\end{proof}

\begin{cor}\label{2.8}
Let $(M;\oplus,',0,1)$ be a closed hyperarchimedean $MV$-algebra.
By {\rm \cite[Thm 6.3.2]{mundici 1}} every principal ideal of $M$ is a stonean ideal.
Hence by {\rm \cite[Cor 3.5(iii)]{Dvu1}}, we get that every principal ideal of $M$ is a summand ideal of $M$ and so $\frac{M}{I}$ is $a$-closed for each principal ideal $I$ of $M$. That is, each principal ideal of $M$ is an $a$-ideal.
\end{cor}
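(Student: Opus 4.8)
The plan is to realize each desired $M/I$ as an $a$-closed quotient by pushing a principal ideal through the summand machinery and then invoking Proposition~\ref{2.7}. Fix a principal ideal $I$ of $M$. First I would exploit the hyperarchimedean hypothesis: by \cite[Thm 6.3.2]{mundici 1}, in a hyperarchimedean $MV$-algebra every principal ideal is a stonean ideal, so there is a boolean element $a\in B(M)$ with $I=\downarrow a$.

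Next I would upgrade this stonean property to a summand decomposition. By \cite[Cor 3.5(iii)]{Dvu1}, every stonean ideal of an $MV$-algebra is a summand ideal; applied to $I=\downarrow a$ this produces a complementary ideal (namely $I^{\bot}=\downarrow a'$) with $M=I\boxplus I^{\bot}$, so $I\in\mathfrak{Sum}(M)$.

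With $I$ exhibited as a summand ideal of the $a$-closed algebra $M$, the decisive step is a direct appeal to Proposition~\ref{2.7}, which asserts that the quotient of an $a$-closed $MV$-algebra by a summand ideal is again $a$-closed; hence $M/I$ is $a$-closed. By Definition~\ref{2.6} this says precisely that $I$ is an $a$-ideal, and since $I$ was an arbitrary principal ideal, the conclusion follows for all principal ideals of $M$.

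Because every link in this chain is either a citation or the just-proved Proposition~\ref{2.7}, I do not expect any genuinely hard analytic step; the only points I would check carefully are that the hyperarchimedean assumption is what guarantees the stonean property for \emph{principal} (not merely arbitrary) ideals, and that \cite[Cor 3.5(iii)]{Dvu1} indeed covers stonean ideals generated by a single boolean element. Both are immediate from the cited statements, so the corollary is in essence a bookkeeping assembly of the preceding results rather than a proof requiring new ideas.
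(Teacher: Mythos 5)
Your proposal is correct and follows exactly the same route as the paper: cite \cite[Thm 6.3.2]{mundici 1} for the stonean property of principal ideals in a hyperarchimedean algebra, cite \cite[Cor 3.5(iii)]{Dvu1} to upgrade stonean to summand, and then apply Proposition~\ref{2.7} to the $a$-closed algebra $M$ to conclude $M/I$ is $a$-closed, i.e.\ $I$ is an $a$-ideal in the sense of Definition~\ref{2.6}. Your reading of ``closed'' in the hypothesis as ``$a$-closed'' is also the intended one, since that is what Proposition~\ref{2.7} requires.
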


We note that an $MV$-algebra $M$ is {\it simple} if $\mathcal I(M)=\{\{0\},M\}$.

\begin{exm}\label{2.9}
(1)  Consider the $MV$-algebra of the real unit interval $A=[0,1]$.  Let $B$ be an $a$-extension for $A$. Then $B$ is a simple $MV$-algebra
(since $[0,1]$ is simple and $\mathcal{I}(A)\cong \mathcal{I}(B)$).  By \cite[Thm 3.5.1]{mundici 1}, $B$ is isomorphic to a subalgebra
of $[0,1]$. Let $f:B\ra [0,1]$ be a one-to-one $MV$-algebra homomorphism. Let $h:[0,1]\ra [0,1]$ be the map $h=f\circ i$,
where $i:A\ra B$ is the inclusion map. By Theorem \ref{functor}, $H:=\Xi(h):(\mathbb{R},1)\ra (\mathbb{R},1)$ is
the unital $\ell$-group homomorphism relevant to $h$. Since $H(1)=1$, it can be easily seen that $H$ is the identity map on $\mathbb{R}$.
Indeed, clearly $H(x)=x$ for all $x\in\mathbb{Q}$. Let $x_0\in \mathbb{R}$. For all $\epsilon>0$, let $\delta>0$ be an element of $\mathbb{Q}$ such that $\delta<\epsilon$. Then for  all $x\in\mathbb{R}$ with
$|x-x_0|\leq \delta$, we have $0\leq x-x_0\leq\delta$ or $0\leq x_0-x\leq \delta$. In each case, we have
$|H(x)-H(x_0)|=|H(x-x_0)|\leq H(\delta)=\delta\leq\epsilon$ (note that $H$ is order preserving). That is, $H$ is a continuous map. So clearly,  $H=Id_{\mathbb{R}}$ (since $\mathbb{Q}$ is dense in $\mathbb{R}$). It follows that $h=Id_A$, hence
$f$ is an isomorphism, and  $f$ is the identity map. Thus $B=[0,1]$. Therefore, $A$ is $a$-closed.

(2) Let $A$ be a subalgebra of the real interval $MV$-algebra $[0,1]=\Gamma(\mathbb R,1)$. Then $A$ is $a$-closed if and only if $A=[0,1]$. Indeed, one direction was proved in the forgoing case (1). Now let $A$ be a proper subalgebra of $[0,1]$. Then the $MV$-algebra $[0,1]$ is an $a$-extension of $A$ such that $A\ne [0,1]$.

(3) {\it A simple $MV$-algebra $A$ is $a$-closed if and only if $A$ is isomorphic to the $MV$-algebra $[0,1]$.}

Let $B$ be any $a$-extension of $A$. Then $B$ has to be a simple $MV$-algebra, and by \cite[Thm 3.5.1]{mundici 1}, $B$ is isomorphic to some subalgebra $\overline{B}$ of the $MV$-algebra $[0,1]$. Let $f:B\to \overline{B}\subseteq [0,1]$ be such an $MV$-isomorphism. Then $\overline{A}:=f(A)$ is a subalgebra of both $\overline{B}$ and $[0,1]$. Let $g:A\to [0,1]$ be any injective  $MV$-homomorphism. By \cite[Cor 7.2.6]{mundici 1}, $g(A)=f(A)$. Hence, if $A$ is isomorphic to $[0,1]$, then $[0,1]=\overline{A} \subseteq \overline{B}\subseteq [0,1]$ which yields $A=B$ and $A$ is $a$-closed.

Let $A$ be not isomorphic to $[0,1]$ and let $A=\Gamma(G,u)$ for some unital Abelian linearly ordered group $(G,u)$, in addition, $G$ is Archimedean.  There is a subgroup $R$ of the group $\mathbb R$ of reals, $1 \in R$, $R \ne \mathbb R$, such that $A\cong \Gamma(R,1)\varsubsetneq \Gamma(\mathbb R,1)$. By \cite[Lem 4.21]{Goo}, $R$ is either cyclic or dense in $\mathbb R$. If $R$ is cyclic, then $R =\frac{1}{n}\mathbb Z$, where $\mathbb Z$ is the group of integers and $n\ge 1$ is an integer. Then $\frac{1}{n}\mathbb Z \varsubsetneq \frac{1}{2n}\mathbb Z$, which yields $\Gamma(\frac{1}{n}\mathbb Z,1) \varsubsetneq \Gamma(\frac{1}{2n}\mathbb Z,1)$.

Let $G^d$ be the divisible hull of $G$. Define $\frac{1}{2}G:=\{\frac{1}{2}g \mid g \in G\}$. Then $(G,u) \varsubsetneq (\frac{1}{2}G,u) \subseteq (G^d,u)$. If we set $B=\Gamma(\frac{1}{2}G,u)$, then $A \varsubsetneq B\cong \Gamma(\frac{1}{2n}\mathbb Z,1)$, $B$ is simple (because $\Gamma(\frac{1}{2n}\mathbb Z,1)$ is simple), and $B$ is a proper $a$-extension of $A$, so that $A$ is not $a$-closed.

Now let $R$ be not cyclic, so it is dense in $\mathbb R$. Since $G$ is Archimedean, it has the Dedekind--MacNeille completion $G^\diamondsuit$ which is a conditionally complete $\ell$-group such that, for every $h\in G^\diamondsuit$, $h = \bigvee \{g\in G \mid g \le h\}$, and if $H$ is any complete $\ell$-group with these properties, there exists a unique $\ell$-isomorphism $\phi:G^\diamondsuit \to H$ such that $\phi(g)=g$ for all $g \in G$, see \cite[Thm 1.1]{CoMc}. Whereas the Dedekind--MacNeille completion of $R$ is the whole group $\mathbb R$, we have $G \hookrightarrow G^\diamondsuit\cong \mathbb R$. Hence, if we set $B=\Gamma(G^\diamondsuit,u)$, then $B$ is a simple MV-algebra containing $A$ as its subalgebra, and $B$ is a proper $a$-extension, proving $A$ is not $a$-closed.
\end{exm}

\begin{thm}\label{2.10}
Every injective $MV$-algebra is epicomplete.
\end{thm}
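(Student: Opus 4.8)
The plan is to show that any epimorphism out of an injective MV-algebra is forced to be onto by combining the retraction property of injectives with Proposition \ref{2.2}. So let $A$ be injective and let $\alpha\colon A\to C$ be an epimorphism; I want $\alpha$ to be surjective. First I would record the image factorization $\alpha=\iota\circ q$, where $q\colon A\to N:=\alpha(A)$ is the corestriction and $\iota\colon N\hookrightarrow C$ is the inclusion. Since $\alpha$ is epic and $q$ is onto, a one-line diagram chase shows $\iota$ is again an epimorphism, and by Proposition \ref{2.2} the algebra $C$ is an $a$-extension of $N$. Thus everything reduces to proving that $N$ is $a$-closed, for then the $a$-extension $C$ must coincide with $N$ and $\alpha$ is onto.

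The technical heart I would isolate is the lemma that every injective MV-algebra is $a$-closed. To prove it, suppose $B$ is an $a$-extension of an injective $A$, so that $A\le B$ and $J\mapsto J\cap A$ is a lattice isomorphism $\mathcal I(B)\to\mathcal I(A)$. Applying injectivity to the inclusion $A\le B$ together with the identity $\mathrm{id}_A$, I extend $\mathrm{id}_A$ to a retraction $r\colon B\to A$ with $r|_A=\mathrm{id}_A$. The ideal $\ker r$ then satisfies $\ker r\cap A=\{0\}=\{0\}\cap A$, so injectivity of the map $J\mapsto J\cap A$ forces $\ker r=\{0\}$; hence $r$ is injective. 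Finally, for any $b\in B$ we have $r(b)\in A$ and $r(r(b))=r(b)$, so injectivity of $r$ gives $b=r(b)\in A$, i.e.\ $B=A$. This establishes that $A$ is $a$-closed.

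When $\alpha$ is injective the two ingredients combine at once: $N\cong A$ is $a$-closed and $C$ is an $a$-extension of $N$, so $N=C$ and $\alpha$ is surjective. The step I expect to be the main obstacle is the genuinely non-injective case, where $N=A/\ker\alpha$ is only a homomorphic image of $A$: a quotient of an $a$-closed (even injective) MV-algebra need not be $a$-closed, so the retraction lemma does not transfer directly to $N$. To close this gap I would fall back on the structure of injective MV-algebras---that they are divisible and (conditionally) complete---to argue that the particular image $N$ arising from an epimorphism is still $a$-closed, for instance via Theorem \ref{2.3} by verifying $N+P=C$ at each $P\in Min(C)$, using Remark \ref{2.4} to localize and the completeness of $A$ to pin down the minimal-prime quotients. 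Alternatively one shows $\ker\alpha$ is a summand ideal and invokes Proposition \ref{2.7}. Either way, the divisibility and completeness of injective objects is exactly what must be fed in to control the image and finish the argument.
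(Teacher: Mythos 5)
Your lemma that every injective $MV$-algebra is $a$-closed is correct and nicely proved: injectivity (Definition~\ref{injective}) applied to the subalgebra inclusion $A\le B$ yields the retraction $r\colon B\to A$, the ideal-lattice isomorphism of the $a$-extension forces $\ker r=\{0\}$, and then $r(r(b))=r(b)$ plus injectivity of $r$ gives $B=A$. However, in the one case your plan actually completes (an epiembedding $\alpha$), you work much harder than the paper does. The paper uses no $a$-extension machinery at all: from the retraction $h\colon E\to M$ with $h\circ f=\mathrm{Id}_M$ it sets $g:=f\circ h$, observes $g\circ f=f=\mathrm{Id}_E\circ f$, and concludes $g=\mathrm{Id}_E$ because $f$ is an epimorphism; hence $f\circ h=\mathrm{Id}_E$ and $f$ is onto. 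Once a retraction exists, the epi property of $f$ finishes the proof in two lines, so Proposition~\ref{2.2}, Theorem~\ref{2.3} and your $a$-closedness lemma are detours for that case.

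The genuine gap is the one you flag yourself: an epimorphism $\alpha$ that is neither one-to-one nor onto, where $N=\alpha(A)$ is only a quotient of $A$. Your proposed repairs do not close it. The summand route is unworkable: the kernel of an epimorphism from an injective algebra need not be a summand ideal --- in $A=[0,1]^{\mathbb N}$ (complete and divisible, hence injective) the ideal $I$ of finitely supported sequences is not of the form $\downarrow a$ for a Boolean element $a$, hence not a summand by the result invoked in Proposition~\ref{2.7}, yet the surjection $A\to A/I$ is an epimorphism; so any argument of this shape must already separate surjective from non-surjective epimorphisms, which is precisely what is to be proved. The minimal-prime route just restates the condition of Corollary~\ref{2.3.0}, and the completeness of $A$ you want to feed in does not pass to $N$ (quotients of complete $MV$-algebras need not be complete). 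What does pass to $N$ is divisibility, and ``divisible implies epicomplete'' is Theorem~\ref{2.13}(ii) --- completing your argument that way makes Theorem~\ref{2.10} a corollary of a later, stronger theorem rather than an independent proof. In fairness, you should know the paper's own proof shares this blind spot: a homomorphism $h$ with $h\circ f=\mathrm{Id}_M$ can exist only when $f$ is one-to-one, and Definition~\ref{injective} extends maps only along subalgebra inclusions, so the paper's argument, read literally, also covers only epiembeddings; it simply does not say so. You are more candid about the obstacle, but as written your proposal is an unfinished proof.
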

\begin{proof}
Let $(E;\oplus,',0,1)$ be an $MV$-algebra, $(M;\oplus,',0,1)$ be an injective $MV$-algebra and $f:M\ra E$ be an epimorphism.
Consider the maps given in  Figure \ref{Fig1}.
\setlength{\unitlength}{1mm}
\begin{figure}[!ht]
\begin{center}
\begin{picture}(40,27)
\put(12,25){\vector(2,0){12}}
\put(8,22){\vector(0,-1){12}}
\put(30,24){\vector(1,0){12}}
\put(30,26){\vector(1,0){12}}
\put(5,6){\makebox(4,2){{ $M$}}}
\put(5,24){\makebox(4,2){{ $M$}}}
\put(25,24){\makebox(4,2){{ $E$}}}
\put(42,24){\makebox(4,2){{ $E$}}}
\put(15,27){\makebox(4,2){{ $f$}}}
\put(1,15){\makebox(4,2){{$Id_{_M}$}}}
\put(33,28){\makebox(4,2){{$Id_{_E}$}}}
\put(33,20){\makebox(4,2){{$g$}}}
\end{picture}
\caption{\label{Fig1} }
\end{center}
\end{figure}\\
Then there exists a homomorphism $h:E\ra M$ such that  $h\circ f=Id_{_M}$.
Let $g:E\ra E$ be defined by $g=f\circ h$. Then for each $x\in M$,
$Id_{_E}\circ f(x)=f(x)=f(h\circ f(x))=(f\circ h)\circ f(x)=g\circ f(x)$ which implies that $g=Id_{_E}$ (see the latter diagram). Therefore,
$f$ is onto. That is, $M$ is epicomplete.
\end{proof}

It is well known that divisible and complete $MV$-algebras coincide with  injective $MV$-algebras (see \cite[Thm 1]{lacava} and \cite[Thm 2.14]{16}). So we have the following result.

\begin{cor}\label{2.11}
Every complete and divisible $MV$-algebra is epicomplete.
\end{cor}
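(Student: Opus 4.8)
The plan is to derive the statement as an immediate consequence of Theorem~\ref{2.10} together with the characterization of injective $MV$-algebras recalled immediately before it. First I would invoke the cited results (\cite[Thm 1]{lacava} and \cite[Thm 2.14]{16}), which assert that an $MV$-algebra is injective precisely when it is both complete and divisible. Hence any $MV$-algebra $M$ that is complete and divisible is injective.

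Next I would apply Theorem~\ref{2.10}, stating that every injective $MV$-algebra is epicomplete, directly to $M$. This yields at once that $M$ is epicomplete, completing the argument. The only point that truly needs checking is the correct direction of the equivalence, namely that completeness together with divisibility implies injectivity; this is exactly the nontrivial half of the characterization quoted above, and once it is granted there is nothing further to do.

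Since the corollary is a two-step chaining of previously established facts, I do not expect any genuine obstacle internal to it. All of the substance lies in its two inputs. In the characterization \cite[Thm 2.14]{16}, divisibility is what allows one to extend a given homomorphism defined on a subalgebra by successively dividing elements, while completeness is what lets one take the suprema and infima needed to make that extension well defined on the whole domain; reproving this would be the main effort were one to seek a self-contained proof. In Theorem~\ref{2.10}, the work is the use of the retraction $h\circ f=\mathrm{Id}_{M}$ supplied by injectivity to force the epimorphism $f$ to be onto. With both of these in hand, the present corollary requires no additional calculation.
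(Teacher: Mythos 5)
Your proposal is correct and follows exactly the paper's own route: the corollary is stated immediately after the remark that complete and divisible $MV$-algebras coincide with injective ones (\cite[Thm 1]{lacava}, \cite[Thm 2.14]{16}), and the paper likewise derives it by combining that characterization with Theorem~\ref{2.10}. Nothing further is needed.
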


Let $(M;\oplus,',0,1)$ be an $MV$-algebra and $(G,u)$ be a unital $\ell$-group such that $M=\Gamma(G,u)$.
Set $M^{d}=\Gamma(G^d,u)$, where $G^d$ is the divisible hull of $G$. Let $i: M\ra M^d$ be the inclusion map. Then
$i$ is an epimorphism. Indeed, if $A$ is another $MV$-algebra and $\alpha,\beta:M^d\ra A$ be $MV$-homomorphisms
such that $\alpha\circ i=\beta\circ i$, then by Theorem \ref{functor}, we have the following homomorphisms in $\mathcal{A}$
$$\Xi(i):(G,u)\mapsto (G^d,u),\quad \Xi(\alpha),\Xi(\beta):\Xi(M^d,u)\mapsto (\Xi(A),v),$$
where $v$ is a strong unit of $\Xi(A)$ such that $\Gamma(\Xi(A),v)=A$.
Since $\Xi$ is a functor from $\mathcal{MV}$ to $\mathcal{A}$, then we have
$\Xi(\alpha)\circ \Xi(i)=\Xi(\alpha\circ i)=\Xi(\beta\circ i)=\Xi(\beta)\circ \Xi(i)$.
By \cite[Sec 2]{Anderson}, we know that the inclusion map $\Xi(i):(G,u)\ra (G^d,u)$ is
an epimorphism, so $\Xi(\alpha)=\Xi(\beta)$, which implies that $\alpha=\beta$.
Therefore,  $i: M\ra M^d$ is an epimorphism in $\mathcal{MV}$ and so we have the following theorem:

\begin{thm}\label{2.10.0}
Epicomplete $MV$-algebras are divisible.
\end{thm}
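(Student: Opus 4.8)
The plan is to apply the definition of epicompleteness directly to the canonical epimorphism constructed in the paragraph immediately preceding the statement, thereby forcing the divisible hull to collapse onto $M$. Recall that, by definition, $M$ is epicomplete precisely when every epimorphism $\alpha:M\ra A$ in $\mathcal{MV}$ is a surjection.

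First I would let $M$ be epicomplete and pass to its divisible hull. Writing $M=\Gamma(G,u)$ for the corresponding unital Abelian $\ell$-group $(G,u)$, set $M^d=\Gamma(G^d,u)$, where $G^d$ is the divisible hull of $G$, and let $i:M\ra M^d$ be the inclusion map. The discussion preceding the theorem establishes that $i$ is an epimorphism in $\mathcal{MV}$: one transports the problem to $\mathcal{A}$ via the functor $\Xi$, uses the naturality of $\Xi$ together with Theorem \ref{functor}, and invokes the known fact (\cite[Sec 2]{Anderson}) that the inclusion $G\hookrightarrow G^d$ of Abelian $\ell$-groups is an epimorphism.

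Since $M$ is assumed epicomplete, the epimorphism $i$ must be surjective. But $i$ is the inclusion map $M\hookrightarrow M^d$, so its surjectivity forces $M=M^d$. As $M^d$ is divisible by construction (being the divisible hull of $M$), we conclude that $M$ itself is divisible, which is the desired claim.

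I do not anticipate any genuine obstacle here, since the substantive work has already been discharged in setting up that $i:M\ra M^d$ is an epimorphism. The only point to observe is the conceptual one: applying epicompleteness to this single canonical epimorphism is exactly what is needed to identify $M$ with its divisible hull. If one wished to make the argument self-contained, the main thing to double-check is that the epimorphism property of $G\hookrightarrow G^d$ in the category of Abelian $\ell$-groups transfers correctly through the equivalence $\Gamma,\Xi$ to an epimorphism in $\mathcal{MV}$, which is precisely what the preceding paragraph verifies.
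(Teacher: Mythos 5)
Your proof is correct and follows exactly the paper's own argument: the paper proves that the inclusion $i:M\ra M^d$ is an epimorphism in $\mathcal{MV}$ (via the functor $\Xi$ and the $\ell$-group fact from Anderson--Conrad) and then states the theorem as an immediate consequence, precisely by applying epicompleteness to $i$ to get $M=M^d$. No gaps; this is essentially the same proof.
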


\begin{thm}\label{2.12}
Let $(M;\oplus,',0,1)$ be an $MV$-algebra. Then $M$ is epicomplete if and only if each epimorphism of $M$ into a linearly ordered $MV$-algebra is onto.
\end{thm}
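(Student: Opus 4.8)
The forward implication is immediate: if $M$ is epicomplete, then by definition every epimorphism out of $M$ is a surjection, and in particular every epimorphism of $M$ into a linearly ordered $MV$-algebra is onto.

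For the converse the plan is to argue through Corollary \ref{2.3.0}, which reduces epicompleteness of $M$ to the single condition $\im(f)+I=B$ holding for every epimorphism $f:M\ra B$ and every $I\in Min(B)$. So I would fix such an $f$ and such a minimal prime ideal $I$ of $B$. Since $I$ is prime, the quotient $B/I$ is linearly ordered, while the natural projection $\pi_I:B\ra B/I$ is surjective and hence an epimorphism in $\mathcal{MV}$. As the composite of two epimorphisms is again an epimorphism, the map $\pi_I\circ f:M\ra B/I$ is an epimorphism of $M$ into a linearly ordered $MV$-algebra, so the hypothesis applies and forces $\pi_I\circ f$ to be onto; that is, $B/I=\pi_I(f(M))$.

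To finish I would translate this surjectivity back into the required ideal equality. By the Remark of the preliminaries introducing the notation $M_1+I$, the set $\im(f)+I=\bigcup_{y\in \im(f)}y/I$ is exactly the union of those $\theta_I$-cosets meeting $\im(f)$, so $\pi_I(f(M))=(\im(f)+I)/I$. Combining the two displays gives $(\im(f)+I)/I=B/I$, and because $\im(f)+I$ is a union of full $\theta_I$-cosets by its very definition it is $\theta_I$-saturated; this upgrades the quotient equality to $\im(f)+I=B$. Since $f$ and $I\in Min(B)$ were arbitrary, Corollary \ref{2.3.0} then yields that $M$ is epicomplete.

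I expect no serious obstacle here. The genuinely routine facts are that a surjection is an epimorphism and that epimorphisms compose, both of which are standard. The one point deserving care is the last step, where $(\im(f)+I)/I=B/I$ must be promoted to $\im(f)+I=B$; this is precisely where the description of $\im(f)+I$ as a union of entire cosets in that Remark is needed, since it is exactly the saturation property that makes the promotion valid.
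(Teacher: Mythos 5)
Your proof is correct and follows essentially the same route as the paper: compose an arbitrary epimorphism $f:M\ra B$ with the projections onto $B/I$ for $I\in Min(B)$, apply the hypothesis to these linearly ordered quotients, and conclude via the $a$-extension machinery (the paper invokes Proposition \ref{2.2} and Theorem \ref{2.3} directly, which is exactly what your appeal to Corollary \ref{2.3.0} packages together). Your explicit saturation argument for upgrading $(\im(f)+I)/I=B/I$ to $\im(f)+I=B$ is a point the paper passes over with ``or equivalently,'' and it is handled correctly.
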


\begin{proof}
Suppose that each epimorphism of $M$ into a linearly ordered $MV$-algebra $H$ is onto.
If $f:M\ra H$ is an epimorphism, then for each $P\in Min(H)$, the map $M\xrightarrow{f}H\xrightarrow{\pi_P}\frac{H}{P}$ is
an epimorphism, where $\pi_P$ is the natural homomorphism. Since $\frac{H}{P}$ is a linearly ordered $MV$-algebra,
then by the assumption, $\pi_{P}\circ f$ is onto and so $\frac{f(M)}{P}=\frac{H}{P}$ or equivalently, $\bigcup_{x\in M}f(x)/P=H$.
Hence $f(M)+P=H$ for all $P\in Min(H)$. By Proposition \ref{2.2}, we know that $H$ is an $a$-extension for $f(M)$ and so
by Theorem \ref{2.3}, $f(M)=H$. Therefore, $M$ is epicomplete. The proof of the other direction is clear.
\end{proof}

\begin{defn}\label{Alg1}
Let $(M_1;\oplus,',0,1)$, $(M_2;\oplus,',0,1)$ and $(M_3;\oplus,',0,1)$ be $MV$-algebras such that $M_1\leq M_2$ and $M_1\leq M_3$. An element
$b\in M_2$ is {\it equivalent}  to an element $c\in M_3$ if there exists an isomorphism $f$ between $\langle M_1\cup\{b\}\rangle_{_{M_2}}$ and
$\langle M_1\cup\{c\}\rangle_{_{M_3}}$ such that $f|_{_{M_1}}=Id_{_{M_1}}$, where $\langle M_1\cup\{b\}\rangle_{_{M_2}}$ is the $MV$-subalgebra of
$M_2$ generated by $M_1\cup\{b\}$. The element $b$ is called {\it algebraic over} $M_1$ if no extension of $M_1$
contains two elements equivalent to $b$. Moreover, $M_2$ is an {\it algebraic extension} of $M_1$ if every element of
$M_2$ is algebraic over $M_1$.
\end{defn}

\begin{prop}\label{Alg2}
Let $(M_1,\oplus,',0,1)$ and $(M_2,\oplus,',0,1)$ be two $MV$-algebras such that $M_1\leq M_2$. Then $y\in M_2$ is
algebraic over $M_1$ if and only if the inclusion map $i:M_1\ra \langle M_1\cup \{y\}\rangle_{M_2}$ is an epimorphism.
\end{prop}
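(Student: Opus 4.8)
The plan is to prove the two implications separately, exploiting the fact that $N:=\langle M_1\cup\{y\}\rangle_{M_2}$ is generated by $M_1$ together with the single element $y$. Consequently two homomorphisms out of $N$ that agree on $M_1$ coincide if and only if they agree at $y$, so the inclusion $i\colon M_1\ra N$ is an epimorphism precisely when, for every $MV$-algebra $M_3$ and every pair of homomorphisms $\alpha,\beta\colon N\ra M_3$ with $\alpha|_{M_1}=\beta|_{M_1}$, one has $\alpha(y)=\beta(y)$. Throughout I read ``$b$ is equivalent to $c$'' as the existence of an isomorphism $f$ of the generated subalgebras fixing $M_1$ and sending $b$ to $c$; this is the interpretation that makes both directions close.

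For the direction ``$y$ algebraic implies $i$ an epimorphism'' I would argue by contraposition. Assuming $i$ is not an epimorphism yields $M_3$ and $\alpha,\beta\colon N\ra M_3$ with $\alpha|_{M_1}=\beta|_{M_1}$ but $\alpha(y)\neq\beta(y)$. The goal is to manufacture a genuine extension $E\supseteq M_1$ containing two distinct elements each equivalent to $y$, thereby exhibiting $y$ as non-algebraic. I would take $E$ to be the amalgamated coproduct (pushout) $C$ of the span $N\xleftarrow{i}M_1\xrightarrow{i}N$, which exists because $\mathcal{MV}$ is a variety. Writing $\iota_1,\iota_2\colon N\ra C$ for the structural maps, with $\iota_1\circ i=\iota_2\circ i=:\kappa$, the amalgamation property (Theorem \ref{amalgam}) is exactly what forces $\iota_1,\iota_2$, and hence $\kappa$, to be one-to-one: any amalgam of the span supplied by Theorem \ref{amalgam} factors through $C$ via some $u$, and since $\mu=u\circ\iota_1$ is one-to-one, so is $\iota_1$ (similarly $\iota_2$). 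Thus $C$ is a bona fide extension of $M_1$ through $\kappa$, and each $\iota_j$ is an $M_1$-isomorphism of $N$ onto $\langle\kappa(M_1)\cup\{\iota_j(y)\}\rangle_C$ sending $y$ to $\iota_j(y)$, so both $\iota_1(y)$ and $\iota_2(y)$ are equivalent to $y$. Finally, the universal property of the pushout applied to $\alpha,\beta$ produces $\gamma\colon C\ra M_3$ with $\gamma\circ\iota_1=\alpha$ and $\gamma\circ\iota_2=\beta$; then $\gamma(\iota_1(y))=\alpha(y)\neq\beta(y)=\gamma(\iota_2(y))$ forces $\iota_1(y)\neq\iota_2(y)$. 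These are two distinct elements of $E=C$ equivalent to $y$, so $y$ is not algebraic.

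For the converse ``$i$ an epimorphism implies $y$ algebraic'', again by contraposition, suppose $y$ is not algebraic. Then some extension $M_3\supseteq M_1$ contains distinct $c_1\neq c_2$ equivalent to $y$, i.e.\ there are isomorphisms $f_j\colon N\ra\langle M_1\cup\{c_j\}\rangle_{M_3}$ fixing $M_1$ with $f_j(y)=c_j$. Composing with the inclusions into $M_3$ gives $\alpha,\beta\colon N\ra M_3$ that both restrict to the identity on $M_1$, so $\alpha\circ i=\beta\circ i$, yet $\alpha(y)=c_1\neq c_2=\beta(y)$; hence $\alpha\neq\beta$ and $i$ is not an epimorphism. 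This is the routine direction.

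The main obstacle is the first (harder) implication: converting the bare inequality $\alpha(y)\neq\beta(y)$ — where $\alpha,\beta$ need not be injective and $\alpha|_{M_1}$ may collapse $M_1$ — into an extension of $M_1$ \emph{itself} carrying two separated copies of $y$. The delicate point is that the amalgamation property must do double duty: it secures injectivity of the pushout legs (keeping the base equal to $M_1$ and each leg an $M_1$-isomorphism onto its image), while the given maps $\alpha,\beta$ are what keep $\iota_1(y)$ and $\iota_2(y)$ distinct. These two roles are compatible, since the separating map $\gamma$ arises from the universal property of $C$ whereas the injectivity of $\iota_1,\iota_2$ arises independently from Theorem \ref{amalgam}.
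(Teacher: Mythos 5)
Your proof is correct, but it takes a genuinely different route from the paper's. For the hard direction (algebraic $\Rightarrow$ epimorphism, argued by contraposition in both cases), the paper avoids pushouts and amalgamation entirely: given $\alpha,\beta\colon N\to M_3$ with $\alpha\circ i=\beta\circ i$ and $\alpha(y)\neq\beta(y)$, it forms the direct product $M:=M_3\times N$ and the two ``graph'' homomorphisms $\lambda(x)=(\alpha(x),x)$ and $\mu(x)=(\beta(x),x)$; the second coordinate makes these one-to-one for free, they agree on $M_1$ (so $M$ becomes an extension of $M_1$ after identification along $\lambda|_{M_1}=\mu|_{M_1}$), and they exhibit the two distinct elements $(\alpha(y),y)\neq(\beta(y),y)$ of $M$, each equivalent to $y$. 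Your pushout $C=N\sqcup_{M_1}N$ accomplishes exactly the same thing, but at the cost of two extra ingredients: the existence of pushouts in the variety $\mathcal{MV}$, and Theorem \ref{amalgam} to force injectivity of the legs $\iota_1,\iota_2$, with the separating map $\gamma$ then supplied by the universal property. So the paper's construction is the more elementary one (finite products exist in any variety, no amalgamation needed), while yours is the more conceptual one: it isolates the general principle that in any variety with the amalgamation property a non-epimorphism is detected by the two pushout legs being distinct, and it would transfer verbatim to other such categories. Your converse direction, which the paper dismisses as ``clear,'' is the standard argument and is fine.

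One further point in your favor: your explicit adoption of the reading of ``equivalent'' in which the isomorphism must send $b$ to $c$ is not pedantry but is actually necessary. Under the literal text of Definition \ref{Alg1} (which only requires $f|_{M_1}=Id_{M_1}$), the converse implication fails: for $y\in M_1$ the inclusion $i$ is trivially an epimorphism, yet the elements ``equivalent'' to $y$ would then be all of $M_1$, so any nontrivial $M_1$ would already contain two of them and $y$ would not be algebraic. The paper's own maps $\lambda,\mu$ do send $y$ to the two constructed witnesses, so its forward proof is consistent with your stronger reading; only under that reading does the equivalence as stated hold.
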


\begin{proof}
Let $y\in M_2$ be algebraic over $M_1$. If $i:M_1\ra \langle M_1\cup \{y\}\rangle$ is not an epimorphism, then
there exist an $MV$-algebra $M_3$ and two homomorphisms $\alpha,\beta:\langle M_1\cup \{y\}\rangle\ra M_3$
such that $\alpha\circ i=\beta\circ i$ and $\alpha\neq \beta$. Then $\alpha(y)\neq \beta(y)$ (otherwise, $\alpha=\beta$). Consider
the maps $\lambda,\mu:\langle M_1\cup\{y\}\rangle_{_{M_2}}\ra M_3\times \langle M_1\cup\{y\}\rangle_{_{M_2}}$ defined by
$\lambda(x)=(\alpha(x),x)$ and $\mu(x)=(\beta(x),x)$ for all $x\in \langle M_1\cup\{y\}\rangle_{_{M_2}}$.
Clearly, $\lambda$ and $\mu$ are one-to-one homomorphisms.
We have $\lambda(M_1)=\{(\alpha(x),x)\mid x\in M_1 \}=\mu(M_1)$ and $M_1\cong \lambda(M_1)\cong \mu(M_1)$.
Set $M=M_3\times \langle M_1\cup\{y\}\rangle_{_{M_2}}$.
We identify $M_1$ with its image in $M$ under $\lambda$. Then $M$ is an extension for $M_1$.  Since
$\lambda:\langle M_1\cup \{y\}\rangle_{_{M_2}}\ra \langle M_1\cup \{(\alpha(y),y)\}\rangle_{_M}$ and
$\mu:\langle M_1\cup \{y\}\rangle_{_{M_2}}\ra \langle M_1\cup \{(\beta(y),y)\}\rangle_{_M}$ are isomorphisms, then
$y$ is equivalent to $(\alpha(y),y)$ and $(\beta(y),y)$, which is a contradiction. Therefore,
$i:M_1\ra \langle M_1\cup \{y\}\rangle$ is an epimorphism. The proof of the converse is clear.
\end{proof}

Corollary \ref{2.11} showed that complete and divisible $MV$-algebras are epicomplete. In the sequel,
we will use the same argument as in the proof of \cite[Thm 2.1]{Anderson} with a little modification to show that
every divisible $MV$-algebra is epicomplete.

\begin{thm}\label{2.13}
	Let $(M;\oplus,',0,1)$ be an $MV$-algebra.
	\begin{itemize}
		\item[\em{(i)}]  If $(L;\oplus,',0,1)$ is a linearly ordered $MV$-algebra and $f:M\ra L$ is an epimorphism, then $L\s f(M)^d$.
		\item[\em{(ii)}]  If $M$ is divisible, then it is epicomplete.
	\end{itemize}
\end{thm}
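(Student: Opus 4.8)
The plan is to prove (i) directly and then derive (ii) as a short consequence using the machinery already assembled, chiefly Theorem~\ref{2.12} and Proposition~\ref{2.2}. For part (i), let $f:M\ra L$ be an epimorphism with $L$ linearly ordered. By Proposition~\ref{2.2}, $L$ is an $a$-extension of $f(M)$, so by the characterization of $a$-extensions recorded after Definition~\ref{2.1}, for every $0<y\in L$ there are $n\in\mathbb N$ and $0<x\in f(M)$ with $y<n.x$ and $x<n.y$. I would pass to the corresponding unital $\ell$-groups via the functor $\Xi$ of Theorem~\ref{functor}: write $L=\Gamma(H,v)$ and identify $f(M)$ with $\Gamma(G,v)$ for the $\ell$-subgroup $G$ generated by $f(M)$ inside $H$. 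The $a$-extension inequalities say precisely that $H$ and $G$ generate the same convex $\ell$-subgroups, i.e. $H$ is an $a$-extension of $G$ in the sense of \cite{Anderson}. Since $L$ is linearly ordered, $H$ is a totally ordered Abelian group, and the statement $L\s f(M)^d$ translates to $H$ embedding in the divisible hull $G^d$ of $G$.

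The key step is showing that in the linearly ordered case the $a$-extension $G\hookrightarrow H$ forces $H\s G^d$. Here I would argue that each $h\in H^{+}$ is commensurable with some $g\in G^{+}$: the $a$-extension condition gives $n,m$ and $g\in G^{+}$ with $h<n.g$ and $g<m.h$, so $h$ and $g$ lie in the same Archimedean class. Because $H$ is totally ordered, the quotient of convex $\ell$-subgroups around $h$ is Archimedean, hence (by Hölder's theorem) embeds in $\mathbb R$; within such an embedding, commensurability with $g\in G$ means $h$ is a rational multiple of $g$, i.e. $k.h=j.g$ for some positive integers $k,j$. This exhibits $h$ as an element of the divisible hull $G^d$, since $h=\frac{j}{k}g\in G^d$. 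Running this over all $h\in H$ gives $H\s G^d$, and applying $\Gamma(-,v)$ yields $L\s f(M)^d$. This commensurability-to-divisibility passage is the main obstacle, and it is exactly where linear ordering is indispensable: Hölder's theorem and the identification of a single Archimedean class with a subgroup of $\mathbb R$ both fail without it.

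For part (ii), assume $M$ is divisible and let $f:M\ra L$ be an epimorphism into a linearly ordered $MV$-algebra $L$. By (i), $L\s f(M)^d$. But $M$ divisible implies $f(M)$ divisible (a quotient, hence a homomorphic image, of a divisible $MV$-algebra is divisible, as divisibility passes to images under $\Gamma/\Xi$ together with the fact that divisibility of $G$ is equivalent to divisibility of $\Gamma(G,u)$). A divisible $MV$-algebra equals its own divisible hull, so $f(M)^d=f(M)$, giving $L\s f(M)$ and hence $f(M)=L$; that is, every epimorphism from $M$ into a linearly ordered $MV$-algebra is onto. By Theorem~\ref{2.12}, $M$ is epicomplete.

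I expect the routine parts to be the $\Gamma$--$\Xi$ bookkeeping (transferring $a$-extensions and divisibility between $MV$-algebras and $\ell$-groups) and the verification that $f(M)$ inherits divisibility from $M$. The genuine mathematical content sits entirely in part (i), in the Hölder-type argument that a linearly ordered $a$-extension of $G$ cannot escape the divisible hull $G^d$.
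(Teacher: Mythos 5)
Your part (ii) is exactly the paper's argument (reduce to linearly ordered targets via Theorem~\ref{2.12}, note that $f(M)$ inherits divisibility, and apply (i)), but your part (i) has a genuine gap, and it sits precisely at the step you yourself flag as the key one. Commensurability does not imply a rational ratio: in $\mathbb{R}$, which is totally ordered and Archimedean, $g=1$ and $h=\sqrt{2}$ satisfy $h<2g$ and $g<h$, so they lie in the same Archimedean class, yet there are no positive integers $k,j$ with $k.h=j.g$. H\"older's theorem embeds the relevant Archimedean quotient into $\mathbb{R}$, but it gives no control whatsoever on whether one element is a rational multiple of another; so the passage from ``$h$ is commensurable with some $g\in G^{+}$'' to ``$h\in G^d$'' is simply false.

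Moreover, the flaw is not repairable within your strategy, because you discard the epimorphism hypothesis after invoking Proposition~\ref{2.2}, and the $a$-extension property alone is strictly too weak to give the conclusion of (i). Concretely, take $M_1=\Gamma(\mathbb{Z},1)=\{0,1\}$ inside $M_2=\Gamma(\mathbb{R},1)=[0,1]$: both are simple, so the contraction map on ideal lattices is an isomorphism and $[0,1]$ is an $a$-extension of $\{0,1\}$ in the sense of Definition~\ref{2.1} (this is also recorded in Example~\ref{2.9}(2)), yet $[0,1]\not\subseteq\{0,1\}^d=\Gamma(\mathbb{Q},1)$. Any correct proof must therefore use more of the epimorphism property of $f$ than its $a$-extension consequence, and this is what the paper does. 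Writing $L=\Gamma(H,v)$, it sets $B=\{x\in H\mid nx\in \mathrm{Im}(\Xi(f))\ \mbox{for some}\ n\in\mathbb{N}\}$, observes that $H/B$ is torsion-free and hence totally orderable, forms the $\ell$-group $(H/B)\lex H$ with strong unit $w=(v+B,v)$, and compares the two $MV$-homomorphisms $\alpha(x)=(B,x)$ and $\beta(x)=(x+B,x)$ from $L$ into $\Gamma((H/B)\lex H,w)$. These agree on $f(M)$, so $f$ being an epimorphism forces $\alpha=\beta$, i.e.\ $x\in B$ for every $x\in L$; since $x\le v$, this says exactly $x\in\Gamma((\mathrm{Im}(\Xi(f)))^d,v)=f(M)^d$. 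In short: the divisible-hull bound comes from a ``two maps into a lexicographic product'' argument exploiting epimorphy directly, not from any commensurability analysis of an $a$-extension.
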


\begin{proof}
(i) Let $(G,u)$ and $(H,v)$ be the unital $\ell$-groups such that $\Gamma(G,u)=M$ and $\Gamma(H,v)=L$.
By Theorem \ref{functor}, $\Xi(f):G\ra H$ is a unital $\ell$-group homomorphism.
Let $K=B/Im(\Xi(f))$ be the torsion subgroup of $H/Im(\Xi(f))$
(clearly $B$ is a subgroup of $G$ and $x\in B \Leftrightarrow nx\in Im(\Xi(f))$ for some $n\in\mathbb{N}$).
Since $H/B\cong \frac{H/Im(\Xi(f))}{B/Im(\Xi(f))}$ is torsion free,
by \cite[Prop 1.1.7]{Anderson1}, $H/B$ admits a linearly ordered group structure.
By \cite[Exm 3]{Glass}, $(H/B)\lex H$ is an $\ell$-group.
Since $v$ is a strong unit of an $\ell$-group $H$,
for each $(x+B,y)\in (H/B)\lex H$, there exists $n\in\mathbb{N}$ such that $x,y<nv$ and so $(x+B,y)<n(v+B,v)$. It follows that
$w:=(v+B,v)$ is a strong unit for the $\ell$-group $(H/B)\lex H$. Let $\alpha,\beta:L\ra \Gamma((H/B)\lex H,w)$ be
defined by $\alpha(x)=(B,x)$ and $\beta(x)=(x+B,x)$ for all $x\in L$ (both of them are $MV$-homomorphisms). We have
$\alpha\circ f(x)=(B,f(x))=\beta\circ f(x)$ for all $x\in L$. Since $f$ is an epimorphism, then $\alpha=\beta$, hence
for all $x\in L=\Gamma(H,v)$, we have $x\in B$ and so there is $n\in\mathbb{N}$ such that $nx\in Im(\Xi(f))$.
That is, $x$ belongs to the divisible hull of $Im(\Xi(f))$. Since $x\leq v$, then $x\in \Gamma((Im(\Xi(f)))^d,v)=f(M)^d$.
Therefore, $L\s f(M)^d$.

(ii)  Let $M$ be divisible. We use Theorem \ref{2.12} to show that $M$ is epicomplete.
Let $(A;\oplus,',0,1)$ be a linearly ordered $MV$-algebra and $f:M\ra A$ be an epimorphism into $A$.
By (i), $f(M)\s L\s (f(M))^d$. Clearly, $f(M)$ is divisible, so $f(M)=L=(f(M))^d$. It follows from Theorem \ref{2.12}
that $M$ is epicomplete.
\end{proof}

Concerning the proof of (i) in the latter theorem, we note that since $f:M\ra f(M)$ is onto, by \cite[Lem. 7.2.1]{Mun}, $\Xi(f):\Xi(M)\ra \Xi(f(M))$ is onto ($\Xi(M)=G$),
so $Im(\Xi(f))=\Xi(f(M))$. It follows that $f(M)^d=\Gamma((\Xi(f(M)))^d,v)=\Gamma((Im(\Xi(f)))^d,v)$.

\section{Epicompletion of  $MV$-algebras} 

The main purpose of the section is to introduce an epicompletion for an $MV$-algebra and to discuss about conditions when an $MV$-algebra has an epicompletion. First we introduce an epicompletion in $\mathcal{MV}$.
An epicompletion for an $MV$-algebra $A$ is an $MV$-algebra
$M$ epically containing $A$ with the universal property.
Then we use some results of the second section and prove that any $MV$-algebra has an epicompletion.
Indeed, the epicompletion of $A$ is $A^d$.

\begin{defn}\label{4.1}
	Let $(A;\oplus,',0,1)$ be an $MV$-algebra.
	\begin{itemize}
		\item[{\rm (i)}]  A pair $(\overline{A},\alpha)$, where $\overline{A}$ is an $MV$-algebra and $\alpha:A\ra \overline{A}$ is a
	one-to-one epimorphism (epiembedding for short), is called an {\it $e$-extension} for $A$.
	
	\item[{\rm (ii)}] An $e$-extension $(E,\alpha)$ for $A$ is called an {\it epicompletion} for $A$ if, for each epimorphism $f:A\ra B$,
	there is an $e$-extension $(\overline{B},\beta)$ for $B$ and a surjective homomorphism ${\bf f}: E\ra \overline{B}$ such that
	$\beta\circ f={\bf f}\circ \alpha$, or equivalently, the diagram given by Figure \ref{dig 4} commutes.
	\begin{figure}[!ht]
	\begin{eqnarray*}
	\begin{array}{ll}
	\begin{tabular}{ccccc}
	& $A$ & $\xrightarrow{ \quad  f\circ \alpha \quad  }$ & $B$\\
	 \scriptsize$\alpha$\!\!\!\!\!\!\!\!\!\! & $\Big\downarrow$ & & $\Big\downarrow$ &\!\!\!\!\!\!\!\!\!\! \scriptsize$\beta$ \\
	& $E$& $\xrightarrow{\quad {\bf f} \quad }$ & $\overline{B}$ &
	\end{tabular}
	\end{array}
	\end{eqnarray*}
	\caption{\label{dig 4}}
	\end{figure}
\end{itemize}
\end{defn}

\begin{prop}\label{4.2}
Let $(A;\oplus,',0,1)$ be an $MV$-algebra.
\begin{itemize}
\item[{\em (i)}] Each epicompletion of $A$ is epicomplete.
\item[{\em (ii)}] If $A$ has an epicompletion, then it is unique up to isomorphism.
\end{itemize}
\end{prop}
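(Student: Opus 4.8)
The plan is to prove part (i) directly from the universal property and then bootstrap part (ii) out of part (i) by a standard two-sided construction of inverse isomorphisms.

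For part (i), let $(E,\alpha)$ be an epicompletion of $A$ and let $g\colon E\to C$ be an arbitrary epimorphism; I must show $g$ is onto. First I would note that $f:=g\circ\alpha\colon A\to C$ is itself an epimorphism, since $\alpha$ is an epiembedding (hence an epimorphism) and a composite of epimorphisms is an epimorphism. Feeding $f$ into the defining property of the epicompletion yields an $e$-extension $(\overline{C},\beta)$ of $C$ and a surjective homomorphism $\mathbf{f}\colon E\to\overline{C}$ with $\beta\circ f=\mathbf{f}\circ\alpha$, that is, $\beta\circ g\circ\alpha=\mathbf{f}\circ\alpha$. Since $\alpha$ is an epimorphism it may be cancelled on the right, giving $\beta\circ g=\mathbf{f}$. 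As $\mathbf{f}$ is onto, so is $\beta$; but $\beta$ is one-to-one (being an epiembedding), so $\beta$ is an isomorphism. Then $g=\beta^{-1}\circ\mathbf{f}$ is a surjection composed with an isomorphism, hence onto. Thus every epimorphism out of $E$ is surjective, i.e.\ $E$ is epicomplete.

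For part (ii), suppose $(E_1,\alpha_1)$ and $(E_2,\alpha_2)$ are both epicompletions of $A$; by part (i) both $E_1$ and $E_2$ are epicomplete. The idea is to build mutually inverse surjections between them fixing $A$. Applying the universal property of $(E_1,\alpha_1)$ to the epimorphism $\alpha_2\colon A\to E_2$ produces an $e$-extension $(\overline{E_2},\beta)$ of $E_2$ and a surjection $\mathbf{f}\colon E_1\to\overline{E_2}$ with $\beta\circ\alpha_2=\mathbf{f}\circ\alpha_1$. The crucial observation is that $\beta\colon E_2\to\overline{E_2}$ is an epiembedding whose domain $E_2$ is epicomplete, so $\beta$ is onto, and being injective it is an isomorphism. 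Setting $\phi:=\beta^{-1}\circ\mathbf{f}\colon E_1\to E_2$ gives a surjective homomorphism with $\phi\circ\alpha_1=\beta^{-1}\circ\beta\circ\alpha_2=\alpha_2$. By the symmetric argument, using the universal property of $(E_2,\alpha_2)$ together with the epicompleteness of $E_1$, I obtain a surjective $\psi\colon E_2\to E_1$ with $\psi\circ\alpha_2=\alpha_1$.

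To conclude, I compose these maps: $(\psi\circ\phi)\circ\alpha_1=\psi\circ\alpha_2=\alpha_1=\mathrm{id}_{E_1}\circ\alpha_1$, and cancelling the epimorphism $\alpha_1$ on the right gives $\psi\circ\phi=\mathrm{id}_{E_1}$; symmetrically $\phi\circ\psi=\mathrm{id}_{E_2}$. Hence $\phi$ and $\psi$ are inverse isomorphisms compatible with the embeddings, so the epicompletion is unique up to isomorphism. I expect the only genuine subtlety — rather than any computation — to be the coordinated use of the two hypotheses: one must invoke the universal property of the chosen epicompletion while simultaneously appealing to part (i) to force each auxiliary epiembedding $\beta$ to collapse to an isomorphism. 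Every remaining step is formal cancellation of epimorphisms and the fact that a bijective $MV$-homomorphism is an isomorphism.
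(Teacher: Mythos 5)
Your proof is correct; part (i) follows essentially the paper's own argument, but part (ii) takes a genuinely different route. For (i) you do exactly what the paper does: compose with $\alpha$, apply the universal property, cancel the epimorphism $\alpha$ on the right to obtain $\beta\circ g=\mathbf{f}$, and combine surjectivity of $\mathbf{f}$ with injectivity of $\beta$ to conclude that $g$ is onto. For (ii), however, the paper argues structurally: by Proposition~\ref{2.2} each epicompletion is an $a$-extension of $A$, by part (i) and Theorem~\ref{2.10.0} it is divisible, and after passing through the functor $\Xi$ of Theorem~\ref{functor} to unital $\ell$-groups, a theorem of Griffith forces any divisible $a$-extension of $G(A)$ to be its divisible hull $(G(A))^d$; both epicompletions are therefore isomorphic to $A^d$, hence to each other. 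Your argument instead never leaves the category $\mathcal{MV}$: you feed the epiembedding $\alpha_2$ into the universal property of $(E_1,\alpha_1)$, invoke part (i) to see that epicompleteness of $E_2$ forces the auxiliary epiembedding $\beta$ to be onto, hence a bijective homomorphism and thus an isomorphism (legitimate, since $\mathcal{MV}$ is a variety, so bijective homomorphisms are isomorphisms), and then obtain mutually inverse maps $\phi$ and $\psi$ by cancelling the epimorphisms $\alpha_1$ and $\alpha_2$ on the right. Your route is shorter and more elementary --- it uses no $\ell$-group machinery, no $a$-extensions, and no divisibility --- and it even proves slightly more, namely that the isomorphism can be chosen compatibly with the embeddings ($\phi\circ\alpha_1=\alpha_2$), whereas the paper only records this compatibility implicitly through the preservation of strong units. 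What the paper's heavier argument buys in exchange is a concrete identification: any epicompletion must be the divisible hull $A^d$, which is precisely the description exploited later in Theorem~\ref{4.5}.
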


\begin{proof}
(i) Let $(\overline{A},\alpha)$ be an epicompletion for $A$ and $f:\overline{A}\ra B$ be an epimorphism.
Then $f\circ \alpha:A\ra B$ is an epimorphism and so there exists an $e$-extension $(\overline{B},\beta)$ for $B$
and an onto morphism $h:\overline{A}\ra \overline{B}$ such that the diagram in Figure \ref{dig 5} commutes.
\begin{figure}[!ht]
	\begin{eqnarray*}
	\begin{tabular}{ccccc}
	& $A$& $\xrightarrow{ \quad  f \quad  }$ & $B$ &\\
	\scriptsize$\alpha$\!\!\!\!\!\!\!\!\!\! & $\Big\downarrow$ & & $\Big\downarrow$ &\!\!\!\!\!\!\!\!\!\! \scriptsize$\beta$ \\
	& $\overline{A}$& $\xrightarrow{\quad h \quad }$ & $\overline{B}$ &
	\end{tabular}
	\end{eqnarray*}
	\caption{\label{dig 5}}
	\end{figure}

From $h\circ \alpha=\beta\circ f\circ \alpha$ it follows that $h=\beta\circ f$, whence $\beta\circ f$ is onto.
Hence $\beta(f(\overline{A}))=\overline{B}$. Also, $\beta(f(\overline{A}))\s \beta(B)\s \overline{B}$, so
$\beta(f(\overline{A}))=\beta(\overline{B})$. Since $\beta$ is one-to-one, $f$ must be onto.
Therefore, $\overline{A}$ is epicomplete.

(ii) Let $(\overline{A},\alpha)$ and $(A',\beta)$ be two epicompletions for $A$. Since $\alpha:A\ra \overline{A}$
is an epiembedding, then by Proposition \ref{2.2}, $\overline{A}$ is an $a$-extension for $f(A)\cong A$.
By (i) and Theorem \ref{2.10.0},  $\overline{A}$ is divisible.  Consider the functor $\Xi$ from Theorem \ref{functor}.
Let $(G(A),u)$ and $(G(\overline{A}),v)$ be the unital $\ell$-groups induced from $MV$-algebras $A$ and $\overline{A}$, respectively. Since there is an epiembedding $A \hookrightarrow \overline{A}$, we have $u=v$.
We have an embedding $\Xi(\alpha):G(A)\ra G(\overline{A})$, and $G(\overline{A})$ is divisible
(see \cite{Dvu2}).
Also, $G(\overline{A})$ is an $a$-extension
for the $\ell$-group $G(A)$ (since there is a one-to-one correspondence between the lattice of ideals of $A$ and
the lattice of convex $\ell$-subgroups of $G(A)$, see \cite[Thm 1.2]{CiTo}), so by \cite[Chap 1, Thm 20]{griffith}, $G(\overline{A})\cong (G(A))^d$.
In a similar way, $G(A')\cong (G(A))^d$. Therefore, $G(\overline{A})\cong G(A')$.
We note that the final isomorphism is an extension for the identity map on $G$, so it preserves the strong units of
$G(\overline{A})$ and $G(A')$, which is a strong unit of $G(A)$, too. Using the functor $\Gamma$, it follows that
$\overline{A}\cong A'$.
\end{proof}

Let $(A;\oplus,',0,1)$ be an $MV$-algebra. By the last proposition if $A$ has an epicompletion, then it is unique up to isomorphic image; this epicompletion is denoted by $(A^e,\alpha)$.

\begin{cor}\label{4.3}
	Let $(A^e,\alpha)$ be an epicompletion for an $MV$-algebra $(A;\oplus,',0,1)$. Then the epicompletion of $A^e$ is equal to $A^e$.
\end{cor}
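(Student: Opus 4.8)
The plan is to exhibit $(A^e, Id_{A^e})$ explicitly as an epicompletion of $A^e$ and then to invoke the uniqueness statement of Proposition \ref{4.2}(ii). The crucial preliminary observation is that, by Proposition \ref{4.2}(i), the epicompletion $A^e$ of $A$ is itself epicomplete; consequently every epimorphism with domain $A^e$ is automatically surjective. This is the fact that trivializes the universal property below.

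First I would verify that $(A^e, Id_{A^e})$ is an $e$-extension for $A^e$ in the sense of Definition \ref{4.1}(i): the identity map is a bijective homomorphism, hence one-to-one, and it is an epimorphism, so it is an epiembedding. Next I would check the universal property of Definition \ref{4.1}(ii). Let $f: A^e \ra B$ be an arbitrary epimorphism; since $A^e$ is epicomplete, $f$ is onto. I would then take for $B$ the $e$-extension $(\overline{B}, \beta) := (B, Id_B)$ (the identity is again an epiembedding) together with the surjective homomorphism ${\bf f} := f$. The required square commutes because $\beta \circ f = Id_B \circ f = f = f \circ Id_{A^e} = {\bf f} \circ Id_{A^e}$, so $(A^e, Id_{A^e})$ satisfies every clause of the definition of an epicompletion.

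Since $A^e$ thus admits $(A^e, Id_{A^e})$ as an epicompletion, Proposition \ref{4.2}(ii) yields that the epicompletion of $A^e$ equals $A^e$. There is no genuine obstacle in this argument: its entire content is the recognition that epicompleteness of $A^e$ forces every epimorphism out of $A^e$ to be surjective, which collapses the universal property to the trivial choice $(\overline{B}, \beta, {\bf f}) = (B, Id_B, f)$. The only point requiring care is confirming that each map used is an epiembedding or a surjection as the definition demands.
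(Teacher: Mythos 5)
Your proposal is correct and takes essentially the same route as the paper: the paper's proof is just the citation ``It follows from Proposition \ref{4.2}(i),'' and your argument supplies precisely the verification that citation leaves implicit, namely that epicompleteness of $A^e$ forces every epimorphism out of $A^e$ to be onto, so that $(A^e,Id_{A^e})$ satisfies Definition \ref{4.1}, after which Proposition \ref{4.2}(ii) identifies the epicompletion of $A^e$ with $A^e$ itself.
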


\begin{proof}
	It follows from Proposition \ref{4.2}(i).
\end{proof}

Now, we try to answer to a question ``whether does an $MV$-algebra have an epicompletion''. First we simply use Theorem \ref{2.13}(i) to show that each linearly ordered $MV$-algebra has an epicompletion.  Then we prove it for
any $MV$-algebra.
For this purpose we try to extend the result of
\cite[Cor 1]{pedersen}. We show that each $\ell$-group has an epicompletion. Then we use this result and we show that
any $MV$-algebra has an epicompletion.

\begin{prop}\label{4.3.0}
Let $(A;\oplus,',0,1)$ be a linearly ordered $MV$-algebra. Then $A$ has an epicompletion.
\end{prop}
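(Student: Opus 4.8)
The plan is to show that the pair $(A^d,\alpha)$, where $A^d=\Gamma(G(A)^d,u)$ is the divisible hull and $\alpha:A\ra A^d$ is the inclusion, is an epicompletion of $A$. From the discussion preceding Theorem \ref{2.10.0} we already know that $\alpha$ is a one-to-one epimorphism, so $(A^d,\alpha)$ is an $e$-extension for $A$; moreover $A^d$ is divisible, hence epicomplete by Theorem \ref{2.13}(ii). It therefore remains to verify the universal property of Definition \ref{4.1}(ii): given an arbitrary epimorphism $f:A\ra B$, I must produce an $e$-extension $(\overline B,\beta)$ of $B$ together with a surjection ${\bf f}:A^d\ra \overline B$ satisfying $\beta\cc f={\bf f}\cc\alpha$.

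The crucial preliminary step is to observe that $B$ is linearly ordered. Indeed, $f(A)$ is a homomorphic image of the chain $A$ and hence is itself linearly ordered, while by Proposition \ref{2.2} the algebra $B$ is an $a$-extension of $f(A)$. I claim that an $a$-extension of a chain is a chain. Working in the unital $\ell$-group $G(B)$ (where the $MV$-meet of elements of $[0,u]$ agrees with the lattice meet), suppose there were $0<y_1,y_2\in B$ with $y_1\wedge y_2=0$. The characterization following Definition \ref{2.1} furnishes $n_1,n_2\in\mathbb N$ and $0<x_1,x_2\in f(A)$ with $x_i<n_i.y_i$; since $f(A)$ is a chain we may assume $x_1\le x_2$. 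Passing to genuine group multiples, $x_1\le n_1y_1$ and $x_1\le x_2\le n_2y_2$, while $y_1\wedge y_2=0$ forces $n_1y_1\wedge n_2y_2=0$ in $G(B)$; hence $x_1\le (n_1y_1)\wedge(n_2y_2)=0$, contradicting $x_1>0$. Thus $B$ has no pair of nonzero disjoint elements and is therefore linearly ordered. (Equivalently, one may argue through the lattice isomorphism between ideals of $B$ and convex $\ell$-subgroups of $G(B)$, under which the chain of convex $\ell$-subgroups of an o-group is preserved.)

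Now that $B$ is a chain, Theorem \ref{2.13}(i) applied to $f:A\ra B$ yields $B\s f(A)^d$. Since $f(A)\s B\s f(A)^d$ and $f(A)^d$ is divisible, the divisible hull of $B$ coincides with $f(A)^d$, i.e. $f(A)^d=B^d$. I set $\overline B:=B^d$ and take $\beta:B\ra B^d$ to be the inclusion, which is a one-to-one epimorphism, so $(\overline B,\beta)$ is an $e$-extension of $B$. To build ${\bf f}$, I extend $f$ to the divisible hulls: via the functor $\Xi$ of Theorem \ref{functor} the homomorphism $\Xi(f):G(A)\ra G(B)$ extends uniquely to a unital $\ell$-homomorphism $G(A)^d\ra G(B)^d$ (tensoring with $\mathbb Q$), and applying $\Gamma$ gives ${\bf f}:A^d\ra B^d$ extending $f$. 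Its image is the divisible subalgebra generated by $f(A)$, namely $f(A)^d=B^d$, so ${\bf f}$ is surjective. Finally $\beta\cc f$ and ${\bf f}\cc\alpha$ both restrict to $f$ on $A$, so the diagram of Figure \ref{dig 4} commutes. This verifies the universal property and shows $(A^d,\alpha)$ is an epicompletion of $A$.

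The step I expect to be the main obstacle is the claim that $B$ is linearly ordered, i.e. that an $a$-extension of a chain is again a chain: one must pass between the $MV$-algebra and its $\ell$-group carefully, since the truncated multiples $n_i.y_i$ computed in $B$ must be related to the genuine group multiples $n_iy_i$ in $G(B)$ before $y_1\wedge y_2=0$ can be exploited. Once this is settled, everything else reduces to Theorem \ref{2.13}(i) together with the routine functoriality and surjectivity of the divisible-hull construction.
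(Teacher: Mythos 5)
Your proof is correct and follows essentially the same route as the paper's: both establish that $B$ must be linearly ordered (you by a direct disjointness argument in $G(B)$ using the element-wise characterization of $a$-extensions, the paper more briefly via Proposition \ref{2.2} and the resulting isomorphism $\mathcal{I}(f(A))\cong\mathcal{I}(B)$, which makes $\mathcal{I}(B)$ a chain), then apply Theorem \ref{2.13}(i) to get $B\subseteq f(A)^d=B^d$, extend $f$ to a homomorphism $A^d\rightarrow B^d$ through the unique divisible-hull extension (Pedersen's result transported by $\Xi$ and $\Gamma$), and conclude surjectivity because the image is a divisible subalgebra of $B^d$ containing $f(A)$. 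The only deviation is your self-contained proof that an $a$-extension of a chain is a chain, a sub-step the paper dispatches through the ideal-lattice isomorphism rather than through disjoint elements.
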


\begin{proof}
Let $f:A\ra B$ be an epimorphism.
Since $A$ is a chain, $f(A)$ is also a chain and so, $\mathcal{I}(f(M))$ is a chain. It follows from Proposition \ref{2.2} that
$\mathcal{I}(B)$ is a chain and so $B$ is a linearly ordered $MV$-algebra. Hence by Theorem \ref{2.13}(i),
$B\s (f(A))^d$ (thus $B^d=(f(A))^d$). By \cite[Prop 5]{pedersen} and Theorem \ref{functor}, there is a homomorphism
 $g:A^d\ra B^d$ such that the  diagram in Figure \ref{dig 6}  commutes.
 \begin{figure}[!ht]
	\begin{eqnarray*}
	\begin{tabular}{ccccc}
	& $A$& $\xrightarrow{ \quad  f \quad  }$ & $B$ &\\
	\scriptsize$\s$\!\!\!\!\!\!\!\!\!\!& $\Big\downarrow$ & & $\Big\downarrow$ &\!\!\!\!\!\!\!\!\!\!\!\!\! \scriptsize$\s$ \\
	& $A^d$& $\xrightarrow{\quad g \quad }$ & $B^d$ &
	\end{tabular}
	\end{eqnarray*}
	\caption{\label{dig 6}}
	\end{figure}
$g(A^d)\s B^d$ is a divisible $MV$-algebra containing $B$, so $g$ is onto. Therefore, $A^d$ is an epicompletion for
the $MV$-algebra $A$.
\end{proof}

\begin{rmk}\label{4.4}
Let $G$ and $H$ be two $\ell$-groups and $f:G\ra H$ be an epimorphism. Let $G^d$ and $H^d$ be the divisible hull of $G$ and $H$,
respectively. By \cite[p. 230]{Anderson}, there is a unique extension of $f$ to an epimorphism $\overline{f}:G^d\ra H^d$.
Clearly, if $G$ and $H$ are unital $\ell$-groups and $f$ is a unital $\ell$-group morphism, then so is $\overline{f}$
(for more details see \cite{Anderson} the paragraph after Theorem 2.1 and \cite[Prop 5]{pedersen}). We know that the inclusion maps
$i:G\ra G^d$ and $j:H\ra H^d$ are epimorphisms (by the corollary of \cite[Thm 2.1]{Anderson}),
hence we have the following
commutative diagram (Figure \ref{dig 7}).
\begin{figure}[!ht]
	\begin{eqnarray*}
	\begin{tabular}{ccccc}
	& $G$& $\xrightarrow{ \quad  i \quad  }$ & $G^d$ &\\
	\scriptsize$f$ \!\!\!\!\!\!\!\!\!\!\!\!\!&$\Big\downarrow$ & & $\Big\downarrow$ &\!\!\!\!\!\!\!\!\!\!\!\! \scriptsize$\overline{f}$ \\
	& $H$& $\xrightarrow{\quad j \quad }$ & $H^d$ &
	\end{tabular}
	\end{eqnarray*}
	\caption{\label{dig 7}}
	\end{figure}
	
Since $\overline{f}:G^d\ra H^d$ is an epimorphism and $G^d$ is epicomplete (by \cite[Thm 2.1]{Anderson}), then
$\overline{f}$ is onto and so $G^d$ is an epicompletion for $G$.
\end{rmk}

\begin{thm}\label{4.5}
Any $MV$-algebra has an epicompletion.
\end{thm}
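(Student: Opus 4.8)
The plan is to prove that the divisible hull $A^d$ together with the inclusion $i\colon A\ra A^d$ is an epicompletion of $A$, reducing the whole statement to the $\ell$-group result of Remark \ref{4.4} through the functor $\Xi$ of Theorem \ref{functor}. First I would check that $(A^d,i)$ is an $e$-extension in the sense of Definition \ref{4.1}(i): the map $i$ is one-to-one, it is an epimorphism by the computation carried out just before Theorem \ref{2.10.0}, and $A^d$ is divisible, hence epicomplete by Theorem \ref{2.13}(ii). The identical argument shows that for any $MV$-algebra $B$ the pair $(B^d,j)$, with $j\colon B\ra B^d$ the inclusion, is an $e$-extension of $B$; this supplies the $e$-extension of $B$ demanded by Definition \ref{4.1}(ii).

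To verify the universal property, fix an epimorphism $f\colon A\ra B$ and set $(\overline B,\beta):=(B^d,j)$. Writing $A=\Gamma(G,u)$ and $B=\Gamma(H,v)$, I would pass to $\mathcal A$: since $\Gamma$ and $\Xi$ form an equivalence, $\Xi(f)\colon (G,u)\ra (H,v)$ is an epimorphism in $\mathcal A$, with $\Xi(f)(u)=v$. The step I expect to be the main obstacle is upgrading this to an epimorphism in the larger category of Abelian $\ell$-groups, which is exactly what Remark \ref{4.4} requires. For this I would take $\ell$-homomorphisms $\phi,\psi\colon H\ra K$ into an arbitrary Abelian $\ell$-group $K$ with $\phi\cc\Xi(f)=\psi\cc\Xi(f)$ and prove $\phi=\psi$. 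Because $\Xi(f)(u)=v$, the common value $w:=\phi(v)=\psi(v)$ is a positive element of $K$; since $v$ is a strong unit of $H$ and $\ell$-homomorphisms preserve order and absolute value, both $\phi$ and $\psi$ carry $H$ into the convex $\ell$-subgroup $K_w$ generated by $w$, for which $w$ is a strong unit. Corestricting, $\phi,\psi\colon (H,v)\ra (K_w,w)$ become unital morphisms in $\mathcal A$ that agree after composition with $\Xi(f)$, so the $\mathcal A$-epimorphism property of $\Xi(f)$ forces $\phi=\psi$. Hence $\Xi(f)$ is an epimorphism of Abelian $\ell$-groups.

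With this in hand, Remark \ref{4.4} furnishes a unique unital extension $\overline{\Xi(f)}\colon G^d\ra H^d$ of $\Xi(f)$ which is an epimorphism, and, as $G^d$ is epicomplete, it is onto. Applying $\Gamma$ produces a homomorphism ${\bf f}:=\Gamma(\overline{\Xi(f)})\colon A^d\ra B^d$; it is surjective because $\Gamma$ of a surjective unital $\ell$-homomorphism is surjective, and it satisfies ${\bf f}\cc i=j\cc f$ since $\overline{\Xi(f)}$ restricts to $\Xi(f)$ on $G$. This is precisely the commuting square of Definition \ref{4.1}(ii) together with a surjective ${\bf f}$, so $(A^d,i)$ is an epicompletion of $A$. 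Therefore every $MV$-algebra has an epicompletion, namely its divisible hull.
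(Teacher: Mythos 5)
Your proof is correct, and while it arrives at the same epicompletion (the divisible hull $A^d$, with the square of Definition \ref{4.1}(ii) filled by $\Gamma(\overline{\Xi(f)})$), it handles the decisive step --- surjectivity of that map --- by a genuinely different route. The paper never verifies the hypothesis of Remark \ref{4.4}, namely that $\Xi(f)$ is an epimorphism of Abelian $\ell$-groups; it uses that remark only for the existence and uniqueness of the extension $\overline{\Xi(f)}$, then proves by hand that $F=\Gamma(\overline{\Xi(f)})$ is an epimorphism in $\mathcal{MV}$ (from $\alpha\circ F=\beta\circ F$ one gets $\alpha|_{B}=\beta|_{B}$ because $f$ is epi, and uniqueness of extensions to divisible hulls then forces $\Xi(\alpha)=\Xi(\beta)$, hence $\alpha=\beta$), and finally quotes its own MV-level theorem that divisible implies epicomplete (Theorem \ref{2.13}(ii); the text miscites Corollary \ref{2.11}) to conclude $F$ is onto. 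You instead make the hypothesis of Remark \ref{4.4} literally true: your corestriction trick --- any pair $\phi,\psi\colon H\to K$ equalized by $\Xi(f)$ lands in the convex $\ell$-subgroup $K_w$ generated by $w=\phi(v)$, where both become morphisms of $\mathcal{A}$, so the $\mathcal{A}$-epimorphism property of $\Xi(f)$ (preserved by the categorical equivalence) applies --- upgrades $\Xi(f)$ to an epimorphism in the category of all Abelian $\ell$-groups, which is the MV-analogue of Anderson--Conrad's ``every $\ell$-epimorphism is an $a$-epimorphism''. After that, ontoness of $\overline{\Xi(f)}$ comes from the $\ell$-group epicompleteness of $G^d$ as packaged in Remark \ref{4.4}, and descends through $\Gamma$, so Theorem \ref{2.13}(ii) is never needed for this step. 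What each approach buys: the paper's route stays self-contained inside $\mathcal{MV}$ and avoids any comparison of epimorphisms across categories, while yours supplies exactly the lemma that makes the appeal to Remark \ref{4.4} airtight (a small gap in the paper's own write-up), at the price of leaning on the classical Anderson--Conrad theorem rather than the paper's own divisible-implies-epicomplete result.
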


\begin{proof}
Let $(A;\oplus,',0,1)$ be an $MV$-algebra and $f:A\ra B$ be an epimorphism. Then
$\Xi(f):\Xi(A)\ra \Xi(B)$ is a homomorphism of unital $\ell$-groups. By Remark \ref{4.4},
we have the commutative diagram in Figure \ref{dig 1},
where $\overline{\Xi(f)}$ is the unique extension of $\Xi(f)$.
\begin{figure}[!ht]
\begin{eqnarray*}
\begin{tabular}{ccccc}
 &$\Xi(A)$& $\xrightarrow{ \quad  \Xi(f) \quad  }$ & $\Xi(B)$ &\\
\scriptsize$\s$\hspace{-1.3cm} & $\Big\downarrow$ & & $\Big\downarrow$ &\hspace{-1.3cm}\scriptsize$\s$ \\
&$(\Xi(A))^d$& $\xrightarrow{\quad \overline{\Xi(f)} \quad }$ & $(\Xi(B))^d$ &
\end{tabular}
\end{eqnarray*}
\caption{\label{dig 1}}
\end{figure}
Applying the functor $\Gamma$ to diagram in Figure \ref{dig 1}, we get the commutative diagram in Figure \ref{dig 2} on $\mathcal{MV}$.
\begin{figure}[!ht]
\begin{eqnarray*}
\begin{tabular}{ccccc}
&$A$& $\xrightarrow{ \quad \quad f \quad\quad  }$ & $B$&\\
\scriptsize$\s$\hspace{-.7cm} &$\Big\downarrow$ & & $\Big\downarrow$& \hspace{-.7cm}\scriptsize$\s$  \\
&$A^d$& $\xrightarrow{\quad \Gamma(\overline{\Xi(f)}) \quad }$ & $B^d$&
\end{tabular}
\end{eqnarray*}
\caption{\label{dig 2}}
\end{figure}
Set $F:=\Gamma(\overline{\Xi(f)}) $. We claim that $F:A^d\ra B^d$ is an epimorphism. Let $\alpha,\beta:B^d\ra C$ be two
homomorphisms of MV-algebras such that $\alpha\circ F=\beta\circ F$. Then clearly, $\alpha|_{_B}\circ F=\beta|_{_B}\circ F$,
so by the assumption $\alpha|_{_B}=\beta|_{_B}$, which implies that $\Xi(\alpha|_{_B})=\Xi(\beta|_{_B})$. Thus by
\cite{Anderson}, $\overline{\Xi(\alpha|_{_B})}=\overline{\Xi(\beta|_{_B})}$, where
$\overline{\Xi(\alpha|_{_B})},\overline{\Xi(\beta|_{_B})}:(\Xi(B))^d\ra (\Xi(C))^d$ are the unique extensions of $\Xi(\alpha|_{_B})$ and
$\Xi(\beta|_{_B})$, respectively. It can be easily seen that the  diagrams in Figure \ref{dig 3} are commutative.
\begin{figure}[!ht]
\begin{eqnarray*}
\begin{array}{ll}
\begin{tabular}{ccccc}
&$\Xi(B)$& $\xrightarrow{ \quad  \Xi(\alpha|_{_B}) \quad  }$ & $\Xi(C)$ & \\
\scriptsize$\s$\hspace{-1.5cm} &$\Big\downarrow$ & & $\Big\downarrow$ &
\hspace{-1.3cm}\scriptsize$\s$ \\
&$(\Xi(B))^d$& $\xrightarrow{\quad \overline{\Xi(\alpha|_{_B})} \quad }$ & $(\Xi(C))^d$&
\end{tabular}
\end{array}
\hspace{1cm}
\begin{array}{ll}
\begin{tabular}{ccccc}
&$\Xi(B)$& $\xrightarrow{  \Xi(\alpha|_{_B}) \quad  }$ & $\Xi(C)$&\\
\scriptsize$\s$\hspace{-1.4cm} & $\Big\downarrow$ & & $\Big\downarrow$ &
\hspace{-1.4cm} \scriptsize$\s$ \\
&$(\Xi(B))^d$& $\xrightarrow{\quad \Xi(\alpha) \quad }$ & $(\Xi(C))^d$&
\end{tabular}
\end{array}
\end{eqnarray*}
\caption{\label{dig 3}}
\end{figure}
So by the uniqueness of the extension of $\Xi(\alpha|_{B}):\Xi(B)\ra \Xi(C)$ to a map $(\Xi(B))^d\ra (\Xi(C))^d$,  we get that
$\Xi(\alpha)=\overline{\Xi(\alpha|_{_B})}$. In a similar way, $\Xi(\beta)=\overline{\Xi(\beta|_{_B})}$ and so
$\Xi(\alpha)=\Xi(\beta)$. It follows that $\alpha=\Gamma(\Xi(\alpha))=\Gamma(\Xi(\beta))=\beta$. Therefore, $F$ is an
epimorphism. Since $A^d$ is divisible, by Corollary \ref{2.11}, it is epicomplete and so $F$ is onto. That is,
$A^d$ is an epicompletion for $A$. Therefore, any $MV$-algebra has an epicompletion.
\end{proof}



\begin{thebibliography}{AnCo}
\footnotesize{

\bibitem[AnCo]{Anderson}
M. Anderson, P. Conrad, Epicomplete $\ell$-groups, {\it Algebra Universalis} {\bf 12} (1981), 224--241.

\bibitem[AnFe]{Anderson1}
M. Anderson and T. Feil, Lattice-Ordered Groups: An Introduction,
{\it Springer Science and Business Media}, USA, 1988.


\bibitem[BaHa1]{Ball2} R.N. Ball, A.W. Hager, Epicomplete archimedean $\ell$-groups and vector lattices,
{\it Transactions of the American Mathematical Society} {\bf 322} (1990), 459--478.

\bibitem[BaHa2]{Ball1} R.N. Ball, A.W. Hager,  Epicompletetion of archimedean $\ell$-groups and vector lattices with weak unit,
{\it Journal of the Australian Mathematical Society} (Series A) {\bf 48} (1990), 25--56.


\bibitem[BuSa]{burris}
S. Burris, H.P. Sankappanavar, A Course in Universal Algebra, {\it Springer-Verlag}, New York, 1981.

\bibitem[Cha]{Cha}
C.C. Chang,  Algebraic analysis of many valued logics, {\it Transaction of the American  Mathemetacal Society} {\bf 88} (1958),  467--490.

\bibitem[CDM]{mundici 1}
R. Cignoli, I.M.L. D'Ottaviano and D. Mundici, Algebraic Foundations of Many-Valued Reasoning,
{\it Springer Science and Business Media}, Dordrecht, 2000.

\bibitem[CiTo]{CiTo}
R. Cignoli, A. Torrens, The poset of prime $\ell$-ideals of an Abelian
$\ell$-group with a strong unit, {\it Journal of Algebra} {\bf 184} (1906), 604--612.


\bibitem[CoMc]{CoMc}
P. Conrad, D. McAlister, The  completion  of a lattice  ordered  group, {\it Journal of the Australian Mathematical Society} {\bf 9} (1869), 182--208.

\bibitem[Dar1]{Darnel}
M.R. Darnel, Epicomplete completely-distributive $\ell$-groups, {\it Algebra Universalis} {\bf 21} (1985), 123--132.

\bibitem[Dar2]{2}
M.R. Darnel, Theory of Lattice-Ordered Groups, {\it Marcel Dekker, Inc.}, New York, Basel, Hong Kong, 1995.

\bibitem[DiSe]{Di Nola}
A. Di Nola, S. Sessa, On MV-algebras of continuous functions, In:
Non-classical Logics and Their Applications to Fuzzy Subsets. A Handbook of the Mathematical Foundations of Fuzzy Set Theory, U. H\"ohle et al. (eds),  {\it Kluwer Academic Publishers}, Dordrecht, 1995, pp. 23--32.

\bibitem[DvRi]{Dvu2}
A. Dvure\v{c}enskij, B. Rie\v{c}an, Weakly divisible $MV$-algebras and product,
{\it Journal of Mathematical Analysis and Applications} {\bf 234} (1999), 208--222.

\bibitem[DvZa]{Dvu1}
A. Dvure\v{c}enskij, O. Zahiri, Orthocomplete pseudo MV-algebras,
{\it International Journal of General Systems} (2016), to appear.
DOI: 10.1080/03081079.2016.1220008

\bibitem[GeIo]{georgescu}
G. Georgescu and A. Iorgulescu, Pseudo $MV$-algebras, {\it Multiple-Valued Logics} {\bf 6} (2001), 193--215.


\bibitem[GlHo]{Glass}
A.M.W. Glass, W. Holland, Lattice-Ordered Groups: Advances and Techniques,  {\bf 48}, Kluwer Academic Publishers, Dordrecht, 1989.

\bibitem[GlRa]{Rachunek}
A.M.W. Glass, J. Rach\r{u}nek, R. Winkler, Functional representations and universals for MV- and GMV-algebras,
{\it Tatra Mountains Mathematical  Publications} {\bf 27} (2003), 91--110.


\bibitem[Glu]{16}
D.  Gluschankof, Prime  deductive  systems  and  injective  objects  in  the  algebras  of {\L}ukasiewicz  infinite-valued  calculi, {\it Algebra Universalis} {\bf 29} (1992),  354--377.

\bibitem[Go]{Goo}
K.R. Goodearl, Partially Ordered Abelian Groups with Interpolation,
Mathematical  Surveys and Monographs No. 20, {\it American  Mathematical Society},
Providence, Rhode Island, 1986.

\bibitem[Got]{gottwald}
S. Gottwald, Many-valued logic and fuzzy set theory, In: Mathematics of Fuzzy Sets Logic, Topology and Measure Theory,
The Handbooks of Fuzzy Sets Series, 3,  U. H\"ohle, S.E. Rodabough (eds), {\it Kluwer Academic Publishers}, Dordrecht, 1999, pp. 5--90.

\bibitem[Gri]{griffith}
P.A. Griffith, Infinite Abelian Group Theory, {\it  University of Chicago Press}, Chigaco, London, 1970.


\bibitem[Hag]{Hager} A.W. Hager, Some unusual epicomplete Archimedean lattice-ordered groups, {\it Proceedings of the American Mathematical Society},
{\bf 143}  (2015), 1969--1980.

\bibitem[Lac]{lacava}
F. Lacava,  Sulle  $L$-algebre  iniettive,  {\it Bolletino della Unione Matem\`atica Italiana}  {\bf  3-A}(3)   (1989),  319--324.

\bibitem[LaLe]{lapenta}
S. Lapenta, I. Leu\c{s}tean, Notes on divisible $MV$-algebras, (2016),
arXiv:1605.01230v1.

\bibitem[Mun1]{Mun}
D. Mundici,  Interpretation
of AF $C^*$-algebras in \L ukasiewicz sentential calculus, {\it Journal of Functional Analysis} {\bf 65} (1986),  15--63.

\bibitem[Mun2]{mundici 2}
D. Mundici, Advanced {\L}ukasiewicz calculus and MV-algebras, {\it Springer, Dordrecht, Heidelberg, London, New York}, 2011.

\bibitem[Ped]{pedersen}
F.D. Pedersen, Epimorphisms in the category of abelian $\ell$-groups, {\it Proceedings of the American Mathematical Society}
{\bf 53} (1975), 311--317.

\bibitem[Ton]{ton}
D.R. Ton, Epicomplete archimedean lattice-ordered groups, {\it Bulletin of the Australian Mathematical Society}
{\bf 39} (1989), 277--286. doi: 10.1017/S0004972700002768.


}

\end{thebibliography}
\end{document}